\documentclass[11pt,
]{amsart}
\usepackage{
amssymb, graphicx
}
\usepackage{mathrsfs} 
\usepackage{amsthm}
\usepackage{bm}
\usepackage{graphicx,color}
\usepackage[dvipsnames]{xcolor}
\usepackage{amsmath}
\allowdisplaybreaks[4]
\newtheorem{theorem}{Theorem}[section]
\newtheorem{lemma}[theorem]{Lemma}
\newtheorem{proposition}[theorem]{Proposition}

\theoremstyle{remark}

\numberwithin{equation}{section}
\date{\today}

\newcommand{\norm}[1]{\lVert #1 \rVert}

\def\R{\mathbb R}

\title[Determination of an anisotropic perturbation]{Determination of an anisotropic perturbation in elastic inverse scattering}

\author[M.~Lassas]{Matti Lassas}
\address{Department of Mathematics and Statistics, University of Helsinki, Helsinki, Finland}
\email{matti.lassas@helsinki.fi}

\author[S.~Ma]{Shiqi Ma}
\address{School of Mathematics, Jilin University, Changchun 130012, China}
\email{mashiqi@jlu.edu.cn}

\author[L.~Oksanen]{Lauri Oksanen}
\address{Department of Mathematics and Statistics, University of Helsinki, Helsinki, Finland}
\email{lauri.oksanen@helsinki.fi}

\author[M.~Salo]{Mikko Salo}
\address{Department of Mathematics and Statistics, University of Jyv\"askyl\"a, Jyv\"askyl\"a, Finland}
\email{mikko.j.salo@jyu.fi}

\author[J. Zhai]{Jian Zhai}
\address{School of Mathematical Sciences,
  Fudan University, Shanghai 200433, China; 
Center for Applied Mathematics,
  Fudan University, Shanghai 200433, China}
 \email{jianzhai@fudan.edu.cn}
\date{}

\begin{document}

\begin{abstract}
We consider a linearized inverse scattering problem for elastic waves. We prove that a fully anisotropic perturbation of the elastic parameters around an isotropic and homogeneous reference can be uniquely determined by (single-)scattered waves. We also give a quantitative stability estimate for an isotropic perturbation, and as a consequence a rigidity result is established.
\end{abstract}

\keywords{elastic wave equations, Born approximation, anisotropy, uniqueness, rigidity, Lam\'e parameters}

\maketitle

\section{Introduction}

The elastic wave equation describes the propagation of mechanical disturbances in solid media, forming a fundamental model in seismology, geophysics, and material science \cite{MarsdenHughes1994}.
The equation reads
\begin{equation} \label{fullelasticWE}
    \rho \partial_t^2 u_i -\partial_j (c_{ijk\ell} \partial_ku_\ell) = 0 \quad \text{in } \mathbb{R}^3 \times \mathbb{R},
\end{equation}
where $\rho$ is the density and $\mathbf{C}=(c_{ijk\ell}(x))$ is the elastic tensor satisfying the following symmetry properties
\[
c_{ijk\ell}=c_{jik\ell}=c_{k\ell ij}.
\]
Assume that there exists a $c_0>0$ such that $\rho(x)>c_0$ and  
\begin{equation}\label{ellipticitycondition}
\sum_{i,j,k,\ell=1}^3c_{ijk\ell}(x)\varepsilon_{ij}\varepsilon_{k\ell}\geq c_0\sum_{i,j=1}^3\varepsilon^2_{ij}
\end{equation}
for any nonzero $3\times 3$ real-valued symmetric matrix $(\varepsilon_{ij})$.

Further we assume that $\mathbf{C},\rho\in C^\infty(\mathbb{R}^3)$ and there are three constants $\lambda_0,\mu_0,\rho_0$ such that quantities $C_{ijk\ell}-\lambda_0 \delta_{ij} \delta_{k\ell}-\mu_0(\delta_{ik} \delta_{j\ell} + \delta_{i\ell} \delta_{jk})$ and $\rho-\rho_0$ are compactly supported in $\Omega:=\{x\in\mathbb{R}^3:|x|<1\}$. Throughout the paper we assume $\rho_0=1$ without loss of any generality.

Let $u_0$ be a solution to \eqref{fullelasticWE} with $\mathbf{C} = \mathbf{C}_0 :=(\lambda_0\delta_{ij}\delta_{k\ell}+\mu_0(\delta_{ik} \delta_{j\ell}+\delta_{i\ell} \delta_{jk}))$ and $\rho=1$ (the medium is isotropic and homogeneous), that is,
\begin{equation} \label{isotropicEWE}
\partial^2_tu_0-(\lambda_0+\mu_0)\nabla\nabla\cdot u_0-\mu_0\Delta u_0=0.
\end{equation}
The ellipticity condition \eqref{ellipticitycondition} reduces to
\[
\mu_0>0,\quad 3\lambda_0 + 2\mu_0>0.
\]
In this study we only use special solutions $u_0$ to \eqref{isotropicEWE}. Denote
\[
c_p=\sqrt{\lambda_0+2\mu_0},\quad c_s=\sqrt{\mu_0},
\]
to be the $P$-wave and $S$-wave speeds respectively. Note that $c_p>c_s$. Notice that we can take a plane $P$-wave solution
\[
u_0=\alpha H_0(c_pt-\theta\cdot x),
\]
where $\alpha=\theta$,
or a plane $S$-wave solution
\[
u_0=\alpha H_0(c_st-\theta\cdot x),
\]
where $\theta\in\mathbb{S}^2$ and $\alpha\perp\theta$. Here $H_0$ is the Heaviside function. It is easy to verify that they both satisfy equation \eqref{isotropicEWE}.
By the standard hyperbolic theory (cf., for example, \cite[Chapter 23]{hormander2007analysis}), there exists a unique distributional solution $U(x,t;\theta,\alpha)$, where $\alpha=\theta$  or $\alpha\perp\theta$, to the equation
\begin{equation} \label{scatteredUeq}
\begin{split}
\rho\partial_t^2U_i -\partial_j (c_{ijk\ell} \partial_kU_\ell) & = 0, \quad \,\,\, \text{in } \mathbb{R}^3 \times \mathbb{R}, \\
U & = u_0, \quad \text{for }t\ll 0.
\end{split}
\end{equation}
This model describes the wave scattering caused by the inhomogeneity encoded in $(\mathbf{C},\rho)$ with incident plane wave $u_0$ in the isotropic and homogeneous surrounding background.

We consider the inverse scattering problem associated with the above equation. In this setting, $u_0$ represents the incident wave, and from the total wave field $U$ measured outside $\Omega$ we aim to recover the elastic tensor $\mathbf{C}$ and the density $\rho$.
In general, we are interested in the invertibility of the forward map
\[
\mathcal{A} \colon (\mathbf{C},\rho) \mapsto \big\{ U(\cdot,\cdot;\theta,\alpha) \vert_{\Omega^c \times (-\infty,T]} \big\vert \theta \in \mathbb{S}^2,\alpha=\theta \text{ or } \alpha \perp \theta \big\}
\]
with $T>0$ sufficiently large, where $U$ satisfies the equation \eqref{scatteredUeq}. When $\mathbf{C}$ is isotropic, inverse boundary value problems have been studied  \cite{romanov2002inverse,rachele2000inverse,rachele2003uniqueness,stefanov2017local,bhattacharyya2018local,zhai2025determination}, while the fully anisotropic problem is far from reach. When the elastic tensor is piecewise analytic, certain types of anisotropy can be dealt with \cite{de2019unique}. While the Boundary Control method, invented by Belishev \cite{belishev1987approach}, can be used to study the anisotropic acoustic wave equation \cite{belishev1992reconstruction}, it does not work well for the elastic wave equation even for the isotropic case \cite{belishev2007recent}. We also refer to \cite{katchalov1998multidimensional,kachalov2001inverse,kurylev2006maxwell,lassas2014inverse} for further development of the Boundary Control method.

While the fully nonlinear problem, that is, the injectivity of $\mathcal{A}$, is widely open, in this article we consider the invertibility of the Frech\'et derivative of $\mathcal{A}$ at $(\mathbf{C}_0,1)$, denoted as $\mathcal{A}'[\mathbf{C}_0,1]$. This problem is sometimes called the Born approximation in physical literature. Denote $\mathbf{\dot{C}}=(\dot{c}_{ijk\ell})$ and $u_0=(u_{01},u_{02},u_{03})$. One can check that $\mathcal{A}'[\mathbf{C}_0,1](\mathbf{\dot{C}},\dot\rho)=\dot{u}\vert_{\Omega^c\times(-\infty,T]}$, where $\dot{u}$ is the solution to
\begin{equation*}
    \begin{aligned}
        \partial^2_t\dot{u}_i -(\lambda_0+\mu_0) \nabla_i \nabla \cdot \dot{u}-\mu_0\Delta \dot{u}_i & = \partial_j(\dot c_{ijk\ell}\partial_ku_{0\ell})-\dot\rho\partial^2_tu_{0i}, && \text{in } \mathbb{R}^3 \times \mathbb{R}, \\
        \dot{u} & = 0, && \text{for }t\ll 0.
    \end{aligned}
\end{equation*}
Here by our assumption, $(\mathbf{\dot C},\dot\rho)$ vanishes outside $\Omega$.
The perturbation of the elastic tensor $\dot c_{ijk\ell}$ inherits the symmetry conditions
\[
\dot c_{ijk\ell}=\dot c_{jik\ell}=\dot c_{k\ell ij}.
\]
We study the invertibility of $\mathcal{A}'[\mathbf{C}_0,1]$ allowing the perturbation $\mathbf{\dot C}$ to be fully anisotropic. The main result of this paper is that $(\mathbf{\dot C},\dot \rho)$ can be fully determined by $\mathcal{A}'[\mathbf{C}_0,1](\mathbf{\dot C},\dot \rho)$. We mention here that a similar linearized problem in frequency domain has been considered in \cite{yang2019unique}, where a transversely isotropic perturbation is proved to be unique. In this paper, we only use incident waves with finitely many incoming angles $\theta$ and polarization directions $\alpha$. Let $\mathbf{e}_i$ denote the unit vector parallel to the positive $x_i$-axis.
The main result can be stated as follows.
\begin{theorem} \label{maintheorem}
    If $\mathcal{A}'[\mathbf{C}_0,1](\mathbf{\dot{C}},\dot{\rho})(x,t;\mathbf{e}_i,\mathbf{e}_j)=0$ for $(x,t)\in \Omega^c\times(-\infty,T]$ and $i,j=1,2,3$, then $(\mathbf{\dot{C}},\dot{\rho})=0$.
\end{theorem}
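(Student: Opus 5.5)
The plan is to reduce the theorem to algebraic identities for the Fourier transforms $\hat{\dot c}_{ijk\ell}$ and $\hat{\dot\rho}$, which are entire since $(\mathbf{\dot C},\dot\rho)$ is compactly supported in $\Omega$, and then to show that these identities force $22$ independent linear constraints. Fix an incident wave $(\theta,\alpha)=(\mathbf e_m,\mathbf e_n)$ with speed $c=c_p$ if $m=n$ and $c=c_s$ if $m\neq n$. Write $\dot u$ for the solution of the linearized problem and $F$ for its source term.

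\textbf{Step 1 (compact support of $\dot u$).} The incident front $\{ct=\theta\cdot x\}$ leaves $\Omega$ after a time of order $1/c_s$, so $F=0$ for $t\ge t_1$. Hence $\dot u$ solves the homogeneous isotropic system there. Applying the Helmholtz decomposition $\dot u=\nabla\phi+\nabla\times\psi$ reduces this system to scalar wave equations with speeds $c_p$ and $c_s$. For $T$ large, $\dot u=0$ on $\Omega^c\times(t_1,T]$, and unique continuation for the wave equation (Tataru's theorem, together with finite propagation speed) gives $\dot u\equiv0$ for $t\ge t_1$. Since also $\dot u=0$ for $t\ll0$, $\dot u$ is compactly supported in $\mathbb R^3\times\mathbb R$. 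Therefore $F=L_0\dot u$ with $L_0$ the isotropic operator, and the full Fourier transform satisfies $\hat F(\xi,\tau)=\sigma_{L_0}(\xi,\tau)\hat{\dot u}(\xi,\tau)$ with $\hat{\dot u}$ entire. Pairing with the null vectors of the symbol gives
\[
\xi\cdot\hat F(\xi,\tau)=0 \ \text{ on } \tau^2=c_p^2|\xi|^2,\qquad \beta\cdot\hat F(\xi,\tau)=0 \ \text{ on } \tau^2=c_s^2|\xi|^2,\ \beta\perp\xi .
\]

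\textbf{Step 2 (computing $\hat F$).} We have $\partial_k u_{0\ell}=-\theta_k\alpha_\ell\,\delta(ct-\theta\cdot x)$ and $\partial_t^2u_0=c^2\alpha\,\delta'(ct-\theta\cdot x)$. Integrating out $t$ on the plane $t=\theta\cdot x/c$ produces the frequency shift $\eta=\xi+\tau\theta/c$. Up to harmless nonzero factors, one obtains
\[
\beta_i\xi_j\theta_k\alpha_\ell\,\hat{\dot c}_{ijk\ell}(\eta)-\tfrac{\tau^2}{c}\,(\beta\cdot\alpha)\,\hat{\dot\rho}(\eta)=0,\qquad \eta=\xi+\tfrac{\tau}{c}\theta,
\]
for all outgoing $(\xi,\tau)$ on the $P$-cone with $\beta=\xi/|\xi|$, or on the $S$-cone with $\beta\perp\xi$.

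\textbf{Step 3 (linear algebra, the main obstacle).} For fixed $\eta$ in a small open set, the admissible outgoing $\xi$ form a one-parameter family (a sphere-type curve) for each incident/outgoing mode pair, and the $S$-outgoing case adds a choice of polarization $\beta$. Each identity is polynomial in $(\xi,\beta)$ with coefficients linear in the $22$ unknowns $\hat{\dot c}_{ijk\ell}(\eta)$ and $\hat{\dot\rho}(\eta)$. Expanding in the free parameter yields several equations per curve. Over the $9$ incident waves ($3$ $P$ and $6$ $S$) and the $2$ outgoing modes, this gives far more than $22$ equations. The key claim is that the resulting matrix has full rank $22$ for $\eta$ in an open set.

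I would try to verify this first for $\eta$ near $0$, i.e. in the backscattering limit $\xi\approx-\tau\theta/c$. There the constraints decouple, and one reads off the contractions $\hat{\dot c}(\theta,\alpha,\theta,\alpha)$ and $\hat{\dot c}(\beta,\theta,\theta,\alpha)$ for coordinate directions. I would then use the mode-conversion equations ($P\to S$ and $S\to P$) with $\theta\neq\beta$ to obtain the mixed components, and the $\tau^2$-dependence to separate $\hat{\dot\rho}$. An explicit symbolic rank computation at one well-chosen $\eta$ would suffice, since rank is lower semicontinuous and so full rank persists on a neighborhood.

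\textbf{Step 4 (conclusion).} Full rank gives $\hat{\dot c}_{ijk\ell}=0$ and $\hat{\dot\rho}=0$ on an open set. Analyticity of these entire functions (Paley--Wiener) extends the vanishing to all of $\mathbb R^3$, so $(\mathbf{\dot C},\dot\rho)=0$.
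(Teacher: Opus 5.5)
\textbf{Steps 1, 2 and 4.} These are a sound reformulation, and a legitimate, arguably cleaner, alternative to the paper's derivation. Once you know that $\dot u$ has compact support in space--time, the vanishing of $\xi\cdot\hat F$ on the $P$-cone and of $\beta\cdot\hat F$ ($\beta\perp\xi$) on the $S$-cone replaces the paper's progressive wave expansions and unique continuation in $\{ct>\theta\cdot x\}$.

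The formula in Step 2 is wrong, though. With $\hat f(\xi,\tau)=\int e^{-i(x\cdot\xi+t\tau)}f$ one has $\widehat{g\,\delta(ct-\theta\cdot x)}=c^{-1}\hat g(\eta)$ and $\widehat{g\,\delta'(ct-\theta\cdot x)}=i\tau c^{-2}\hat g(\eta)$, so the condition is
\[
\beta_i\xi_j\theta_k\alpha_\ell\,\hat{\dot c}_{ijk\ell}(\eta)+c\,\tau\,(\beta\cdot\alpha)\,\hat{\dot\rho}(\eta)=0 .
\]
This is homogeneous of degree one in $(\xi,\tau)$. Your $\tau^2/c$ breaks that homogeneity, so the plan to separate $\hat{\dot\rho}$ through a $\tau^2$-dependence rests on a miscalculation. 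With the correct coefficient, the $PP$ condition at the admissible point, multiplied by $4(\eta\cdot\theta)^2$, is exactly the Fourier transform of the paper's $PP$ identity (cf.~\eqref{PP1} for $\theta=\mathbf e_1$). Likewise, the two $SP$ conditions (one per root) are equivalent to the paper's two $SP$ identities (cf.~\eqref{SP121}, \eqref{SP122}).

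\textbf{Step 3, the genuine gap.} This step carries the whole content of the theorem and is only asserted, and the heuristic you give for it is incorrect. Fix the incident direction $\theta$ and fix $\eta$. The relation $\xi=\eta-\tau\theta/c$ determines $\xi$ from $\tau$, and the cone equation then leaves finitely many $\tau$:
\begin{itemize}
\item one value, $\tau=c|\eta|^2/(2\eta\cdot\theta)$, for same-mode scattering;
\item the two roots of a quadratic for mode conversion. For $S\to P$ these are real only when $(\eta\cdot\theta)^2\ge(1-c_s^2/c_p^2)|\eta|^2$. Elsewhere you must use complex points; that is legitimate since everything is entire, but it has to be said.
\end{itemize}
So there is no curve and no free parameter to expand in. Counting polarizations, each $P$-incident wave gives $1+4$ scalar equations at $\eta$ and each $S$-incident wave gives $2+2$. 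That is $39$ equations for $22$ unknowns, necessarily with at least $17$ dependencies.

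Whether this system has rank $22$ is not a routine check. It is exactly what the subsection on uniqueness of the parameters establishes, by a long elimination that is pointwise in $\eta$:
\begin{itemize}
\item $|\eta|^2(\hat{\dot c}_{1112}-\hat{\dot c}_{2221})=0$ from the two $SP$ relations;
\item then $\eta_1\hat{\dot c}_{3221}=\eta_2\hat{\dot c}_{3112}$ via the $SS$ relations;
\item then $(\eta_1^2-\eta_2^2)(2\hat{\dot c}_{1212}-c_s^2\hat{\dot\rho})=0$ and $(\eta_1^4-\eta_2^4)\hat{\dot c}_{1212}=0$;
\item and so on, down to the diagonal components via the $PS$ and $PP$ relations.
\end{itemize}

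Your proposed starting point $\eta\approx 0$ is the worst place to test the rank. At $\eta=0$ the mode-converted points collapse to $(\xi,\tau)=(0,0)$ and only same-mode backscattering survives. The surviving equations give only nine constraints: $\hat{\dot c}_{mmmm}=c_p^2\hat{\dot\rho}$, $\hat{\dot c}_{mnmn}=c_s^2\hat{\dot\rho}$, and $\hat{\dot c}_{mpmn}=0$ for distinct $m,n,p$. Near $0$, the mode-conversion rows depend on the direction of $\eta$ rather than decoupling.

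To close the argument you must actually carry out such an elimination. Alternatively, do a genuine symbolic rank computation at a generic $\eta$, using the corrected coefficient and including both roots of each mode-conversion quadratic. As written, the proposal reduces the theorem to a statement equivalent to it and stops there.
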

We mention here that the 
linearized inverse boundary value problem (at an isotropic reference, not necessarily homogeneous) can be reduced to certain tensor tomography problems of fourth order tensors \cite{de2021generic}. To be more precise, using $P$-waves we end up with the longitudinal ray transform, from which one can determine $5$ parameters; using $S$-waves we end up with the mixed ray transform, from which $9$ parameters can be determined. But the most general anisotropic elastic tensor has $21$ independent parameters. So our result determines a total number of $22$ parameters in $(\mathbf{\dot C},\dot\rho)$. One surprising fact about this result is there is no gauge freedom in the determination, while for the anisotropic acoustic wave equation or Maxwell's equations certain gauge freedoms exist.
We also refer to \cite{Joonas2025gauge} for a detailed study on the gauge freedoms of elastic parameters in a Riemannian geometric setting (instead of the Euclidean setting considered in this article).
In this study, we will adopt the strategy recently used for fixed angle inverse scattering problems \cite{salo2020fixed,rakesh2020fixed,ma2023fixed,ms2020fixed,oksanen2024rigidity1,oksanen2024rigidity2}, which is intrinsically different from the geometrical approaches.

\medskip

For the case where $\mathbf{\dot C}=\dot{\lambda}\delta_{ij}\delta_{k\ell}+\dot{\mu}(\delta_{ik}\delta_{j\ell}+\delta_{i\ell}\delta_{jk})$ is isotropic, we will also establish a Lipschitz stability estimate for the inverse of $\mathcal{A}'[\mathbf{C}_0,1]$, which would give a rigidity result of an isotropic perturbation using Stefanov-Uhlmann's argument \cite{SU12}. This scheme has been established for an acoustic wave equation in \cite{ma2023fixed}. Now we assume $\mathbf{C}=\lambda\delta_{ij}\delta_{k\ell}+\mu(\delta_{ik}\delta_{j\ell}+\delta_{i\ell}\delta_{jk})$ where $\lambda-\lambda_0,\mu-\mu_0,\rho-1\in C_c^\infty(\Omega)$. Assume that
\[
\rho>0, \quad\mu>0,\quad 3\lambda+2\mu>0.
\]
Then the elastic wave equation \eqref{fullelasticWE} reduces to
\begin{equation*}
    \rho \partial_t^2 u- \nabla \cdot (\lambda (\nabla\cdot u) I + (\nabla u+\nabla^T u)) = 0 \quad \text{in } \mathbb{R}^3 \times \mathbb{R}.
\end{equation*}
The equation \eqref{scatteredUeq} satisfied by $U$ can be written as
\[
\begin{split}
 \rho\partial_t^2U-\nabla\cdot(\lambda (\nabla\cdot U)I+(\nabla U+\nabla^T U))&=0,\quad\,\,\,\text{in }\mathbb{R}^3\times\mathbb{R},\\
U&=u_0,\quad\text{for }t\ll 0.
\end{split}
\]
We write the operator $\mathcal{A}$ as
\[
\mathcal{A} \colon (\lambda,\mu,\rho) \mapsto \big\{ U(\cdot,\cdot;\theta,\alpha) \vert_{\Omega^c\times(-\infty,T]} \big\vert \theta \in \mathbb{S}^2,\alpha = \theta \text{ or } \alpha \perp \theta \big\}.
\]
Here we only need to use one incident angle $\theta$ with two polarization directions: $\theta$ for the $P$-wave and $\alpha\perp\theta$ for the $S$-wave.
The main result can be summarized in the following theorem.
\begin{theorem} \label{thm:2-EL25}
   Fix $\theta,\alpha\in\mathbb{S}^2$ with $\alpha\perp\theta$. There exist constants $C > 0$, $s_0 > 3/2$, $k \geq 0$ being an integer, $s \in (-s_0, s_0)$, $\gamma \in (0,1)$ such that for $(\lambda, \mu,\rho)$ sufficiently close to $(\lambda_0,\mu_0,1)$, there holds
    \begin{align*}
        C \| (\lambda, \mu,\rho) - ( \lambda_0, \mu_0,1) \|_{H^{s_0}(\Omega)}
        & \leq \| \mathcal{A}(\lambda,\mu,\rho)(\cdot,\cdot;\theta,\theta) - \mathcal{A}(\lambda_0,\mu_0,1)(\cdot,\cdot;\theta,\theta) \|_{H^{-k}((-\infty,T]; H^{s+2}(\Omega^c))}^\gamma \\
        & + \| \mathcal{A}(\lambda,\mu,\rho)(\cdot,\cdot;\theta,\alpha) - \mathcal{A}(\lambda_0,\mu_0,1)(\cdot,\cdot;\theta,\alpha) \|_{H^{-k}((-\infty,T]; H^{s+2}(\Omega^c))}^\gamma.
    \end{align*}
\end{theorem}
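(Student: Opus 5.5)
Following the scheme of \cite{SU12,ma2023fixed}, we would split the proof into a Lipschitz stability estimate for the linearized map $\mathcal{A}'[\mathbf{C}_0,1]$ restricted to isotropic perturbations $(\dot\lambda,\dot\mu,\dot\rho)$, using only the two incident waves $(\theta,\theta)$ and $(\theta,\alpha)$, and a perturbation argument.

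\textbf{Step 1: linearized injectivity with stability.} First we prove
\[
\|(\dot\lambda,\dot\mu,\dot\rho)\|_{H^{s}(\Omega)} \le C\big(\|\dot u_P\|_{H^{-k}((-\infty,T];H^{s+2}(\Omega^c))}+\|\dot u_S\|_{H^{-k}((-\infty,T];H^{s+2}(\Omega^c))}\big),
\]
where $\dot u_P,\dot u_S$ are the Born-scattered fields for the $P$- and $S$-plane waves. We would adapt the fixed angle method (Carleman/energy estimates in the variables $(x,t)$ with a weight adapted to the plane front $c_\bullet t=\theta\cdot x$). The key observation is that the source $\partial_j(\dot c_{ijk\ell}\partial_k u_{0\ell})-\dot\rho\,\partial_t^2u_{0i}$ with Heaviside $u_0$ produces a $\delta'$-type singularity on the front. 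Its leading jump coefficients along the $P$ and $S$ characteristic surfaces determine explicit linear combinations of $(\dot\lambda,\dot\mu,\dot\rho)$ restricted to the front.

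Concretely, the $P$-wave with $\alpha=\theta$ forces, on the $P$-front, the coefficient $(\dot\lambda+2\dot\mu)-c_p^2\dot\rho$ together with lower-order terms. The $S$-wave forces $\dot\mu-c_s^2\dot\rho$ on the $S$-front. The mode-converted parts, such as the $S$ component generated from the $P$ incidence and traveling off the $P$-front, carry further independent combinations; these are obtained by projecting with $\nabla\cdot$ and $\nabla\times$ to decouple the equation into scalar wave equations. Solving the resulting $3\times3$ system gives $(\dot\lambda,\dot\mu,\dot\rho)$. Once these traces are known, a Volterra-type/Carleman argument propagated across fronts as in \cite{ma2023fixed} gives uniqueness with a Lipschitz bound in low norms.

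\textbf{Step 2: nonlinear remainder.} We write $\mathcal{A}(\lambda,\mu,\rho)-\mathcal{A}(\lambda_0,\mu_0,1)=\mathcal{A}'[\cdot](\delta)+R(\delta)$ with $\|R(\delta)\|\le C\|\delta\|_{H^{s_1}}^2$ for some $s_1$ large, using standard hyperbolic well-posedness for the full solution $U$ (with the Heaviside data handled after subtracting $u_0$, so the source is compactly supported). Combining with Step 1 in a weak norm gives
\[
\|\delta\|_{H^{s}}\le C\|\text{data}\|+C\|\delta\|_{H^{s_1}}^2 .
\]
We then interpolate between $H^s$ and a high norm $H^{s_2}$, which is bounded a priori by closeness, to get $\|\delta\|_{H^{s_0}}\le C\|\delta\|_{H^s}^{\gamma'}$ with $s_0>3/2$. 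The quadratic term is absorbed when $\delta$ is small, giving the Hölder exponent $\gamma$.

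\textbf{Main obstacle.} We expect Step 1 to be the hardest part. The difficulty is to show that only two polarizations at a fixed angle yield three independent combinations, with Lipschitz control. This requires a careful analysis of the $P$/$S$ coupling at the fronts, since $c_p\ne c_s$ separates the singular supports, and Carleman estimates for the coupled Lamé system, which we would try to reduce to scalar wave estimates via the Helmholtz decomposition.
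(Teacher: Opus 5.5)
The gap is in Step 1: the mechanism you propose for extracting the parameters does not work. For same-mode scattering the front is characteristic, so the coefficients you want to read off are not local in $x$. In the $P$--$P$ channel the leading coefficient solves the transport equation $2c_p^2\,\theta\cdot\nabla w_2=-(\dot\lambda+2\dot\mu-c_p^2\dot\rho)$. In the $S$--$S$ channel it solves $2c_s^2\,\theta\cdot\nabla w_2=(\alpha\times\theta)(\dot\mu-c_s^2\dot\rho)$. Data on $\Omega^c\times(-\infty,T]$ therefore give only the X-ray transform of these combinations in the single direction $\theta$. That transform has an infinite-dimensional kernel, so the combinations are not determined at all, let alone Lipschitz stably.

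The idea you are missing is to use the Heaviside coefficient $w_0$ behind the front. It solves $\square_p w_0=0$ in $\{c_pt>\theta\cdot x\}$ with $L_pw_0=\mathcal D_p:=\sum_{r=0}^2(\frac{\Delta}{2}L_\theta^{-1})^rF_r$ on $\{c_pt=\theta\cdot x\}$. This identity couples all three source coefficients $F_2,F_1,F_0$ (of $\delta''$, $\delta'$, $\delta$). The Goursat-type estimate \cite[Proposition 1.4]{ma2023fixed} gives $\|\mathcal D_p\|_{L^2(\Omega)}\lesssim\|\nabla\cdot\dot u\|_{H^2(\Sigma_{T,p})}$. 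Moreover $L_\theta^2\mathcal D_p$ is an explicit fourth-order differential expression in $(\dot\lambda,\dot\mu,\dot\rho)$. The same holds for $\mathcal D_s(\theta,\alpha)$ with $\nabla\times\dot u$. The energy/Carleman machinery you allude to is the right kind of tool, but it must be applied to this Goursat problem, not to the leading jumps.

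The three independent pieces of information also do not come from where you place them. In the paper they come from the vector structure of $\mathcal D_s$, in the frame $\mathbf{e}_1=\theta$, $\mathbf{e}_2=\alpha$, $\mathbf{e}_3=\theta\times\alpha$:
\begin{itemize}
\item the $\mathbf{e}_2$-component of $\mathcal D_s(\theta,\alpha)$ is exactly $\partial_2\partial_3\dot\mu$ (it comes from the $\delta$-coefficient $G_0$ alone), which controls $\dot\mu$;
\item the $\mathbf{e}_3$-component of $\partial_1^2\mathcal D_s$ then gives a multiple of $(2\partial_1^2+\Delta)\Delta\dot\rho$ modulo $\dot\mu$, which is elliptic;
\item $\partial_1^2\mathcal D_p$ gives $-\frac14\Delta^2\dot\lambda$ modulo $\dot\mu,\dot\rho$.
\end{itemize}
Only the $P$--$P$ and $S$--$S$ channels are used.

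Your mode-conversion step is unspecified and not automatic. For isotropic perturbations the $S$-part generated on the $P$-front has no $\delta$-term, and its first jump is a multiple of $\theta\times\nabla(2\dot\mu-c_p^2\dot\rho)$. Together with your two combinations, the $3\times 3$ system has determinant $2c_s^2-c_p^2=-\lambda_0$. This vanishes for the admissible background $\lambda_0=0$. Any mode-conversion route would also need a quantitative version of the unique continuation arguments, which the paper uses only qualitatively in the anisotropic uniqueness proof.

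Finally, you state Step 1 with the weak data norm $H^{-k}H^{s+2}$, which is stronger than what the Goursat estimate yields. The paper's linear bound loses derivatives: $\|\dot\kappa\|_{L^2}\lesssim\|(\dot u^p,\dot u^s)\|_{H^{10}H^{10}}$. So your Step 2 must also interpolate on the data side, as in \cite[Theorem 2]{SU12}, with the product of the two interpolation exponents exceeding $1/2$ to absorb the quadratic remainder. The bound in the higher norm is also an extra a priori hypothesis, not a consequence of closeness in $H^{s_0}$.
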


The above result implies that if $\mathcal{A}(\lambda, \mu,\rho)=\mathcal{A}(\lambda_0, \mu_0,1)$ and $(\lambda, \mu,\rho)$ is sufficiently close to constant $(\lambda_0,\mu_0,1)$, then $(\lambda, \mu,\rho)=(\lambda_0,\mu_0,1)$.
We mention that a similar rigidity result was recently established in \cite{ilmavirta2026rigidity} using the Dirichlet-to-Neumann (DtN) map without the closeness assumption. However, DtN map encodes scattering data with all incident angles, while we have only used a single angle.\\

The rest of the paper is organized as follows. In Section \ref{secPP}, we prove Theorem \ref{maintheorem} for the determination of a fully anisotropic perturbation in the linearized problem. In Section \ref{Rigidity}, we revisit the linearized problem with an isotropic perturbation and single-angle data. With Stefanov-Uhlmann's scheme, we also prove  Theorem \ref{thm:2-EL25} for the nonlinear map.

\section{Analysis of scattered waves and injectivity of the linearized map}

In this section we prove Theorem \ref{maintheorem}. We will carefully analyze the single scattered waves $\dot{u}$ generated by incident wave $u_0$.\\

\subsection{Send $P$-wave, observe $P$-wave}\label{secPP}
First, take a $P$-wave solution
\[
u_0=\theta H_0(c_pt-\theta\cdot x),
\]
as the incidence wave. 
We now have
\[
\partial^2_t\dot{u}_i-(\lambda_0+\mu_0)\nabla_i\nabla\cdot \dot{u}-\mu_0\Delta \dot{u}_i=-\partial_j\dot c_{ijk\ell}\theta_k\theta_\ell\delta(c_pt-\theta\cdot x)+\dot c_{ijk\ell}\theta_j\theta_k\theta_\ell\delta'-c_p^2\dot \rho\theta_i\delta'.
\]
Here $\delta$ is the delta distribution. The solution $\dot{u}$ satisfies the above equation in the distributional sense. We first use the scattered $P$-wave information contained in $\dot{u}\vert_{\Omega^c\times(-\infty,T]}$. For this, we take divergence of both sides of the above equation, 
\begin{equation} \label{PPlinearizeEWE}
    \square_p (\nabla\cdot \dot u)
    = -\partial_i \partial_j \dot c_{ijk\ell} \theta_k \theta_\ell \delta + 2\partial_i \dot c_{ijk\ell} \theta_j \theta_k \theta_\ell \delta'-\dot c_{ijk\ell} \theta_i \theta_j \theta_k \theta_\ell \delta''-c_p^2\theta\cdot\nabla\dot \rho\delta'+c_p^2\dot \rho\delta'',
\end{equation}
where $\square_p := \partial^2_t - \Delta_p$ and $\Delta_p := (\lambda_0 + 2\mu_0) \Delta$.
Denote $H_j(s)=s^jH_0(s)$ for $j=1,2,\cdots$.
We will show that $\nabla \cdot \dot{u}$ can be written as the progressive wave expansion
\begin{equation} \label{PPansatz}
\nabla\cdot\dot{u}=w_2(x)\delta'+w_1(x)\delta+w_0(x) H_0+w_{-1}(x) H_1+w'(x,t),
\end{equation}
where $w_2,w_1,w_{0},w_{-1}\in C^\infty(\mathbb{R}^3)$. 
Inserting the ansatz on the right hand side of \eqref{PPansatz} into the equation \eqref{PPlinearizeEWE}, we end up with
\[
\begin{split}
&-c_p^2[\Delta w_2\delta'-2\theta\cdot\nabla w_2\delta'']\\
&-c_p^2[\Delta w_1\delta-2\theta\cdot\nabla w_1\delta']\\
&-c_p^2[\Delta w_{0}H_0-2\theta\cdot\nabla w_{0}\delta]\\
&-c_p^2[\Delta w_{-1}H_1-2\theta\cdot\nabla w_{-1}H_0]\\
&+\square_pw'\\
=&-\partial_i\partial_j\dot c_{ijk\ell}\theta_k\theta_\ell\delta+2\partial_i\dot c_{ijk\ell}\theta_j\theta_k\theta_\ell\delta'-\dot c_{ijk\ell}\theta_i\theta_j\theta_k\theta_\ell\delta''-c_p^2\theta\cdot\nabla\dot \rho\delta'+c_p^2\dot \rho\delta''.
\end{split}
\]
In this subsection the argument for $\delta',\delta,H_j$ is always $c_pt-\theta\cdot x$ and we occasionally omit it.
We then choose $w_2,w_1,w_{0},w_{-1}$ and $w'$ such that
\begin{alignat}{3}
    \partial^2_tw'-c_p^2\Delta w'-c_p^2\Delta w_{-1}H_1 &=0, \quad && \text{in } \mathbb{R}^3\times\mathbb{R}, \label{PPidentity1} \\
    2c_p^2\theta\cdot\nabla w_{-1}-c_p^2\Delta w_{0}&=0,\quad&&\text{in }\mathbb{R}^3,\\
    2c_p^2\theta\cdot\nabla w_{0}-c_p^2\Delta w_1&=-\partial_i\partial_j\dot c_{ijk\ell}\theta_k\theta_\ell,\quad&&\text{in }\mathbb{R}^3, \label{PPidentity2} \\
    2c_p^2\theta\cdot\nabla w_1-c_p^2\Delta w_2&=2\partial_i\dot c_{ijk\ell}\theta_j\theta_k\theta_\ell-c_p^2\theta\cdot\nabla\dot \rho,\quad&&\text{in }\mathbb{R}^3, \label{PPidentity3} \\
    2c_p^2\theta\cdot\nabla w_2&=-\dot c_{ijk\ell}\theta_i\theta_j\theta_k\theta_\ell+c_p^2\dot \rho,\quad&&\text{in }\mathbb{R}^3. \label{PPidentity4}
\end{alignat}
For the construction, we can take
\[
w_2(x)=\frac{1}{2c_p^2}\int_0^\infty -\dot c_{ijk\ell}(x-s\theta)\theta_j\theta_k\theta_\ell+c_p^2\dot \rho(x-s\theta)\mathrm{d}s,
\]
\[
w_1(x)=\frac{1}{2c_p^2}\int_0^\infty c_p^2\Delta w_2(x-s\theta)+2\partial_i\dot c_{ijk\ell}(x-s\theta)\theta_j\theta_k\theta_\ell-c_p^2\theta\cdot\nabla\dot \rho(x-s\theta)\mathrm{d}s,
\]
\[
w_{0}(x)=\frac{1}{2c_p^2}\int_0^\infty c_p^2\Delta w_1(x-s\theta)-\partial_i\partial_j\dot c_{ijk\ell}(x-s\theta)\theta_k\theta_\ell\mathrm{d}s,
\]
and
\[
w_{-1}(x)=\frac{1}{2}\int_0^\infty\Delta w_{0}(x-s\theta)\mathrm{d}s.
\]
By above construction $w_2,w_1,w_{0},w_{-1}$ all vanish in $\{x\cdot\theta<-1\}$, then $w'$ has to satisfy \eqref{PPidentity1} such that $w'=0$ for $t\ll 0$. By standard hyperbolic theory (cf., for example, \cite[Chapter 7]{EvanPDEs}), there exists a unique solution $w'\in H^2_{\mathrm{loc}}$, and it vanishes in $\{c_pt<\theta\cdot x\}$. This justifies our ansatz \eqref{PPansatz}. We also refer the proof of \cite[Proposition 3.1]{ma2023fixed} for more details.

\medskip

We can now write
\begin{equation} \label{PPansatz1}
\nabla\cdot\dot{u}=w_2(x)\delta'+w_1(x)\delta+v'(x,t) H_0,
\end{equation}
by taking 
\[
v'(x,t)=w_{0}(x)+w_{-1}(x)(c_pt-\theta\cdot x)+w'(x,t)\in H^2_{\mathrm{loc}}.
\]
Now let us assume that 
\[
\mathcal{A}'[\mathbf{C}_0,1](\mathbf{\dot C},\dot \rho)=0,
\]
or equivalently $\dot{u}=0$ in $\Omega^c\times(-\infty,T]$. Then $\nabla\cdot\dot{u}=0$ in $\Omega^c\times(-\infty,T]$. By the ansatz \eqref{PPansatz1} we conclude that 
\[
v'=0\quad\text{in } \{c_pt>\theta\cdot x\}\cap(\Omega^c\times(-\infty,T]).
\]
Also, since $\nabla\cdot\dot{u}=v'$ in $\{c_pt>\theta\cdot x\}$, we have
\[
\square_p v'=0,\quad\text{in }\{c_pt>\theta\cdot x\}.
\]
We recall the unique continuation principle, see e.g.~\cite[Theorem 3.16]{katchalov2001}.

\begin{proposition} \label{th_ucp}
Let $\omega \subset \R^n$ be open, and $T, c \in (0,+\infty)$, $t_0 \in \R$ all be constants. Define 
\begin{equation*}
K = K(\omega, T, t_0) = \{ (x, t) \in \R^{n + 1} \mid d(x, \omega) / c < T - |t - t_0| \},
\end{equation*}
where $d$ is the distance function on $\R^n$. 
Let $u \in H^1(K)$ satisfy $\Box_c u = 0$ in $K$ and $u = 0$ in $\omega \times (t_0 - T, t_0 + T)$.
Then $u = 0$ in $K$.
\end{proposition}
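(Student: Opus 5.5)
The plan is to derive the statement from Tataru's local unique continuation theorem for operators whose coefficients do not depend on time, combined with a global sweeping (foliation) argument, as in \cite[Theorem 3.16]{katchalov2001}. Replacing $t$ by $ct$ (equivalently $x$ by $x/c$) reduces $\Box_c$ to $\partial_t^2-\Delta$, so we may take $c=1$. The domain $K$ changes accordingly, and the hypothesis $u\in H^1(K)$ is unaffected.

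\emph{Regularization.} Since $\Box_c$ has constant coefficients, convolution in $t$ with a mollifier $\phi_\varepsilon(t)$ commutes with $\Box_c$. It maps $u$ to a function $u_\varepsilon$ that is smooth in $t$ with values in $H^1$, defined on the slightly shrunken set $K_\varepsilon=K(\omega,T-\varepsilon,t_0)$. The function $u_\varepsilon$ still solves $\Box_c u_\varepsilon=0$ and still vanishes on $\omega\times(t_0-T+\varepsilon,t_0+T-\varepsilon)$. Then $\Delta u_\varepsilon=\partial_t^2u_\varepsilon$, and elliptic regularity in $x$ makes $u_\varepsilon$ smooth. It therefore suffices to prove the claim for smooth solutions and let $\varepsilon\to0$.

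\emph{Local step.} Tataru's theorem says the following. Let $P$ be a second-order operator with coefficients independent of $t$, and let $\Sigma=\{\psi=0\}$ be a $C^2$ hypersurface through a point $(x_1,t_1)$ that is non-characteristic there, i.e.\ $(\partial_t\psi)^2\neq|\nabla_x\psi|^2$. If $Pu=0$ near $(x_1,t_1)$ and $u=0$ on the side $\{\psi>0\}$ near that point, then $u=0$ in a full neighbourhood of $(x_1,t_1)$. I will take this as the known local result. The key point is that, because the coefficients are time-independent, the condition required is only non-characteristic, not the stronger pseudoconvexity needed in Hörmander's theorem.

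\emph{Global sweeping.} This is the main obstacle: we must arrange non-characteristic surfaces that exhaust $K$. First, fix $y$ with $d(y,\omega)<T$ and a ball $B(z,r)\subset\omega$ chosen so that $|y-z|-r$ is close to $d(y,\omega)$. It then suffices to show $u=0$ in the double cone $\{(x,t):|x-z|-r<T-|t-t_0|\}$, and by monotonicity it is enough to treat the concentric case. Consider the family of ellipsoid-type surfaces
\[
\Sigma_s=\{(x,t):\ |x-z|^2-\kappa (t-t_0)^2 = s\},\qquad \kappa\in(0,1).
\]
For $x\neq z$ these are non-characteristic, since $|\nabla_x\psi|^2-(\partial_t\psi)^2=4|x-z|^2-4\kappa^2(t-t_0)^2>0$ on them.

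The continuation argument runs as follows. Let $s^*$ be the supremum of those $s$ for which $u$ vanishes on the part of $\{\psi<s\}$ lying inside the cone (truncated away from its tip boundary). Near any point of $\Sigma_{s^*}$ inside the cone, the local theorem shows that $u$ vanishes past $\Sigma_{s^*}$. A compactness argument along the compact piece of $\Sigma_{s^*}$ then contradicts maximality of $s^*$, unless $\Sigma_{s^*}$ already reaches the boundary of the cone. Finally, letting $\kappa\uparrow1$ and $r\uparrow$ the admissible radius fills out the open double cone $|x-z|-r<T-|t-t_0|$, and taking the union over all choices of $y$ and $z$ gives $K$.

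The delicate bookkeeping is to check that the surfaces $\Sigma_s$ meet the boundary $\partial K$ only where $u$ is already known to vanish. This is what forces the level sets to be truncated by the cone, and it is where the exact shape $d(x,\omega)<T-|t-t_0|$ of $K$ enters.
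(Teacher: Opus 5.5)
Your overall route is the one behind \cite[Theorem 3.16]{katchalov2001}, which the paper simply cites: Tataru's local unique continuation across non-characteristic surfaces for time-independent coefficients, followed by a sweeping argument. Your regularization and local step are fine. The reduction to a ball is also fine, except that the cone should be $K(B(z,r),T,t_0)=\{(|x-z|-r)_+<T-|t-t_0|\}$. The set $\{|x-z|-r<T-|t-t_0|\}$ contains points with $|t-t_0|\ge T$, which lie outside the domain of $u$.

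The genuine gap is in the global step: your hyperboloids open the wrong way. Take $c=1$, $z=0$, $t_0=0$. Then $\Sigma_s$ is $|x|=\sqrt{s+\kappa t^2}$, whose radius grows with $|t|$, while the slices of $K$ have radius $r+T-|t|$, which shrinks. Whenever $s>r^2-\kappa T^2$ and $\Sigma_s$ meets $K$, it leaves $K$ through the characteristic part $\{|x|-r=T-|t|,\ |x|>r\}$ of $\partial K$, where nothing is known about $u$. So the property you defer to ``bookkeeping'' (surfaces meet $\partial K$ only where $u$ already vanishes) is false for every nontrivial $s$.

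Consequently $\Sigma_{s^*}\cap K$ is not relatively compact in $K$, and the Tataru neighbourhoods cannot be chosen uniformly near the exit set. Truncating to a smaller cone $K_\delta$ does not help. At points of $\Sigma_{s^*}\cap\partial K_\delta$ the local theorem needs $u=0$ on the whole side $\{\psi<s^*\}$ nearby, including points outside $K_\delta$, which you do not have. A symptom of the reversed geometry is that $K\subset\{\psi<(r+T)^2\}$ for every $\kappa$, so the limit $\kappa\uparrow 1$ plays no role for your family.

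Your non-characteristic computation is also only valid for $s\ge 0$. For $s<0$, $\Sigma_s$ is characteristic where $|x|=\kappa|t|$, and negative $s$ is needed to start the sweep when $r^2<\kappa T^2$.

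The fix is to use surfaces that close up inside the known zero set. For example, take the lens-shaped surfaces $\{|x|+\sqrt{b+\kappa t^2}=a\}$ with $b>0$ and $\kappa\in(0,1)$, restricted to $|x|\ge r'$ for some $r'<r$.
\begin{itemize}
\item Their conormal $\bigl(x/|x|,\ \kappa t/\sqrt{b+\kappa t^2}\bigr)$ has time component of modulus less than $\sqrt{\kappa}<1$, so they are non-characteristic.
\item For $r'\le a<r'+\sqrt{b+\kappa T^2}$, the piece with $|x|\ge r'$ is compact and lies in $K$.
\item Its edge $\{|x|=r'\}$ lies inside $B(0,r)\times(-T,T)$, where $u$ is known to vanish.
\end{itemize}
The open--closed argument in $a$ then gives $u=0$ on $\{|x|\ge r',\ |x|+\sqrt{b+\kappa t^2}<r'+\sqrt{b+\kappa T^2}\}$. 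Letting $b\downarrow 0$, $\kappa\uparrow 1$ and $r'\uparrow r$ exhausts $K$. Here the limit $\kappa\uparrow 1$ is genuinely needed.
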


\begin{lemma} \label{lem_ucp}
Let $s, a \in \R$.
Suppose that $u \in H^1(\R^{n + 1})$ satisfies $\Box_c u = 0$ in the set
\begin{equation*}
 R= \{ (x, t) \in \R^{n + 1} \mid \theta \cdot x < \min(c(t - s), a) \}    
\end{equation*}
and that $u = 0$ in $\Omega^c \times (-\infty,T)$ for $T$ sufficiently large.
Then $u = 0$ in $R$.
\end{lemma}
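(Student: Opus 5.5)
The plan is to derive the lemma from the unique continuation principle, Proposition~\ref{th_ucp}, applied to a family of double cones $K(\omega,T',t_0')$. Each cone will have base $\omega$ in the exterior half-space $\{\theta\cdot x<-1\}\subset\Omega^c$, where $u$ is known to vanish. First observe that $\Box_c u=0$ holds in the interior of $R\cup(\Omega^c\times(-\infty,T))$: in $R$ by hypothesis, and in $\Omega^c\times(-\infty,T)$ because $u\equiv0$ there. So it suffices to find cones that sit inside this union and whose union contains $R$ (or the relevant part of $R$).

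Take $\omega=\{x:\theta\cdot x<-1\}$. Then $d(x,\omega)=(\theta\cdot x+1)_+$, so
\[
K=\{(x,t): (\theta\cdot x+1)_+<c\,(T'-|t-t_0'|)\}.
\]
I would check the inclusion into the union in two cases.
\begin{itemize}
\item Points of $K$ with $\theta\cdot x<-1$ lie in $\Omega^c$ and have $t<t_0'+T'$. They are therefore in $\Omega^c\times(-\infty,T)$ provided $t_0'+T'\le T$.
\item Points of $K$ with $\theta\cdot x\ge-1$ satisfy $\theta\cdot x+1<c(t-t_0'+T')$ and $\theta\cdot x+1<cT'$. This places them in $R$ as long as $t_0'-T'\ge s-1/c$ and $cT'\le a+1$.
\end{itemize}
Choosing $t_0'-T'=s-1/c$ and $T'=(a+1)/c$ (here I assume $a>-1$; otherwise the part of $R$ near $\Omega$ is empty and only the exterior needs treatment) makes all the constraints compatible once $T\ge s-1/c+2(a+1)/c$. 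This is where "$T$ sufficiently large" enters. Proposition~\ref{th_ucp} then gives $u=0$ on $K$, and $u=0$ on $\omega\times(t_0'-T',t_0'+T')$ holds by assumption.

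The final step is to show that these cones exhaust $R$. I expect this to be the main obstacle. $K$ is bounded in time, while $R$ contains points with $t$ arbitrarily large and $\theta\cdot x$ close to $a$. To deal with this, I would use the whole family of admissible cones: the smaller cones with $t_0'-T'\ge s-1/c$, $cT'\le a+1$ and $t_0'+T'\le T$, together with the trivially known region $\Omega^c\times(-\infty,T)$. I would then verify that a point $(x,t)\in R$ with $t$ below a threshold determined by $T$, $a$ and $s$ is covered. Concretely, pick $T'$ slightly larger than $(\theta\cdot x+1)_+/c$ and center $t_0'$ near $t$; the constraints then reduce to $\theta\cdot x<\min(c(t-s),a)$.

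For the application in the paper, only the portion of $R$ with $t$ bounded in terms of $a$ and $s$ is needed. For that portion I would make the dependence of $T$ on $a$ and $s$ explicit. For larger $t$, I would either enlarge $T$ accordingly or iterate the argument, using the newly obtained zero region $\{\theta\cdot x<a\}$ over the already-covered time range as the next base $\omega$.
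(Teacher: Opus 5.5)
Your argument is correct. It uses the same tool as the paper, namely Proposition~\ref{th_ucp} applied to double cones whose base lies in $\Omega^c$, but with a different choice of cones.

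The paper fixes $(x_0,t_0)\in R$ with $x_0\in\Omega$ and takes as base a small neighbourhood of the single point $y=x_0-r\theta\in\Omega^c$. It proves by a triangle-inequality contradiction that the closed diamond $\overline{K(\{y\},r/c,t_0)}$ lies in $R$, then shrinks the base by compactness, which requires $T\ge t_0+r/c$.

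You take the half-space base $\omega=\{\theta\cdot x<-1\}$. Since $d(x,\omega)=(\theta\cdot x+1)_+$, the cone is explicit and the containment is a two-line check. Your pointwise choice $t_0'=t$, with $T'$ slightly above $(\theta\cdot x+1)_+/c$, does satisfy all three constraints and yields $u=0$ on $R\cap\{t+(\theta\cdot x+1)_+/c<T\}$. This is a clean quantitative form of ``$T$ sufficiently large''. It also handles exterior points and the case $a\le-1$, which the paper leaves implicit. The cost is a marginally worse threshold: the paper only needs $r$ to exceed the exit distance of $x_0$ from the unit ball along $-\theta$, which is at most $\theta\cdot x_0+1$.

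Two small remarks. First, your first case is not needed. Every point of your cone has $t>t_0'-T'\ge s-1/c$, hence $c(t-s)>-1>\theta\cdot x$ whenever $\theta\cdot x<-1$. So $K\subset R$ outright, and you never use $\Box_c u=0$ outside $R$; the constraint $t_0'+T'\le T$ is only needed for the vanishing on the base.

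Second, the ``iterate'' option in your last paragraph gains nothing. A cone through $(x,t)$ needs vanishing on its base up to time $t_0'+T'>t$. With a base $\{\theta\cdot x<a'\}$ taken from the region already covered, the constraints reduce to the same condition $t+(\theta\cdot x+1)/c<T$. So, exactly as in the paper's proof, $T$ should simply be allowed to depend on the time window of interest.
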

\begin{proof}
After a rotation in space and translation in time we may assume that $\theta = (1,0,0)$ and $s = 0$. Let $x_0 = (x_0^1, x_0^2, x_0^3) \in \Omega$ satisfy $x_0^1 < a$ and let $t_0 > x_0^1 / c$.
Choose large enough $r > 0$ so that $y := x_0 - r \theta$ has a neighborhood $\omega$ that is contained in $\Omega^c$. Then the closed diamond $\overline{K(\{y\}, r/c, t_0)}$
contains $(x_0, t_0)$ and is contained in $R$. 
For a small enough neighborhood $\omega \subset \Omega^c$ of $y$, the open diamond $K(\omega, r/c, t_0)$
also contains $(x_0, t_0)$ and is contained in $R$. If $T$ is taken sufficiently large, we have $u=0$ in $\omega\times (t_0-r/c,t_0+r/c)$. It follows from Proposition \ref{th_ucp} that $v' = 0$ near $(x_0, t_0)$.

To see $\overline{K(\{y\}, r/c, t_0)}$ is indeed contained in $R$, we argue as follows. 
Let $(x,t) \in \overline{K(\{y\}, r/c, t_0)}$. It follows from
\begin{equation*}
|x^1 - x^1_0 + r|/c \le r/c
\end{equation*}
that $x^1 \le x^1_0$. In particular $x^1 < a$.
To get a contradiction, suppose that $c t \le x^1$.
We have
\begin{equation*}
t \le x^1/c \le x_0^1/c < t_0, \quad |t_0 - t| = t_0 - t > |x_0^1 - x^1| / c
\end{equation*}
and
\begin{equation*}
|x^1 - x^1_0 + r|/c + |x_0^1 - x^1| / c
< 
|x^1 - x^1_0 + r|/c + |t_0 - t| \le r/c.
\end{equation*}
Finally, we arrive to the contradiction
\begin{equation*}
r 
= 
r - |x^1 - x^1_0| + |x_0^1 - x^1|
\le 
|x^1 - x^1_0 + r| + |x_0^1 - x^1|
< r.
\end{equation*}
The proof is done.
\end{proof}

Taking $s = 0$ and large enough $a > 0$, it follows from Lemma \ref{lem_ucp} that 
that $v'=0$ in $\{c_pt>\theta\cdot x\}$ and then also on $\{c_pt=\theta\cdot x\}$ by continuity.

Notice that $v'(x,t)=w_{0}(x)$ on $\{c_pt=\theta\cdot x\}$, thus now we have $w_{0}=0$ in $\mathbb{R}^3$. Then the identity \eqref{PPidentity2} reduces to
\[
-c_p^2\Delta w_1=-\partial_i\partial_j\dot c_{ijk\ell}\theta_k\theta_\ell\quad \text{in }\mathbb{R}^3.
\]
Together with \eqref{PPidentity3} and \eqref{PPidentity4}, we end up with the equation
\[
4(\theta\cdot\nabla)^2\partial_i\partial_j\dot c_{ijk\ell}\theta_k\theta_\ell+\Delta^2\dot c_{ijk\ell}\theta_i\theta_j\theta_k\theta_\ell-c_p^2\Delta^2\dot \rho=4(\theta\cdot\nabla)\Delta\partial_i\dot c_{ijk\ell}\theta_j\theta_k\theta_\ell-2c_p^2(\theta\cdot\nabla)^2\Delta\dot \rho.
\]
In particular, if we take $\theta=\mathbf{e}_1$, the above equation reads
\begin{equation} \label{PP1}
4\partial_1^2\partial_i\partial_j\dot c_{ij11}+\Delta^2\dot c_{1111}-c_p^2\Delta^2\dot \rho=4\partial_1\Delta\partial_i\dot c_{i111}-2c_p^2\partial_1^2\Delta\dot \rho.
\end{equation}

\subsection{Send $S$-wave, observe $P$-wave}
Now take an $S$-wave solution
\[
u_0=\alpha H_0(c_st-\theta\cdot x),\quad \alpha\perp\theta,
\]
we have
\[
\partial^2_t\dot{u}_i-(\lambda_0+\mu_0)\nabla_i\nabla\cdot \dot{u}-\mu_0\Delta \dot{u}_i=-\partial_j\dot c_{ijk\ell}\theta_k\alpha_\ell\delta(c_st-\theta\cdot x)+\dot c_{ijk\ell}\theta_j\theta_k\alpha_\ell\delta'-c_s^2\dot \rho\alpha_i\delta'.
\]
In this subsection the argument for $\delta'',\delta',\delta,H_j$ is always $c_st-\theta\cdot x$ and we occasionally omit it.
Taking divergence of both sides, we have
\begin{equation} \label{SPlinearizeEWE}
    [\partial^2_t-c_p^2\Delta] (\nabla\cdot \dot{u}) = -\partial_i \partial_j \dot c_{ijk\ell} \theta_k \alpha_\ell \delta (c_st-\theta\cdot x) + 2\partial_i \dot c_{ijk\ell} \theta_j \theta_k \alpha_\ell \delta' - \dot c_{ijk\ell} \theta_i \theta_j \theta_k \alpha_\ell \delta'' - c_s^2 \alpha_i \partial_i \dot \rho \delta'.
\end{equation}
We use the ansatz
\begin{equation} \label{SPansatz}
    \nabla\cdot\dot{u}=w_{1}(x)\delta(c_st-\theta\cdot x)+w_{0}(x) H_0+w_{-1}(x) H_1+w_{-2}(x) H_2+w'(x,t),
\end{equation}
where $w_1,w_{0},w_{-1},w_{-2}\in C^\infty(\mathbb{R}^3)$ are all smooth functions. Inserting the above ansatz into \eqref{SPlinearizeEWE}, one obtains
\[
\begin{split}
&\quad c_s^2w_1\delta''-c_p^2\left[\Delta w_1\delta-2\theta\cdot\nabla w_1\delta'+w_1(x)\delta''\right]\\
&+c_s^2w_{0}\delta'-c_p^2\left[\Delta w_{0}H_0-2\theta\cdot\nabla w_{0}\delta+w_{0}\delta'\right]\\
&+c_s^2w_{-1}\delta-c_p^2\left[\Delta w_{-1}H_1-2\theta\cdot\nabla w_{-1}H_0+w_{-1}\delta\right]\\
&+c_s^2w_{-2}H_0-c_p^2\left[\Delta w_{-2}H_2-2\theta\cdot\nabla w_{-2}H_1+w_{-2}H_0\right]\\
&+\square_pw'\\
=&-\partial_i\partial_j\dot c_{ijk\ell}\theta_k\alpha_\ell\delta(c_st-\theta\cdot x)+2\partial_i\dot c_{ijk\ell}\theta_j\theta_k\alpha_\ell\delta'-\dot c_{ijk\ell}\theta_i\theta_j\theta_k\alpha_\ell\delta''-c_s^2\alpha_i\partial_i\dot \rho\delta'.
\end{split}
\]
We can take $w_1,w_{0},w_{-1},w_{-2}$ and $w'$ such that
\begin{alignat}{3}
\square_p w'-c_p^2\Delta w_{-1}H_1-c_p^2\Delta w_{-2}H_2+2c_p^2\theta\cdot\nabla w_{-2}H_1&=0,\quad&&\text{in }\mathbb{R}^3\times\mathbb{R}, \label{SPidentity1}\\
(c_s^2-c_p^2)w_{-2}+2c_p^2\theta\cdot\nabla w_{-1}-c_p^2\Delta w_{0}&=0,\quad&&\text{in }\mathbb{R}^3,\\
(c_s^2-c_p^2)w_{-1}+2c_p^2\theta\cdot\nabla w_{0}-c_p^2\Delta w_1&=-\partial_i\partial_j\dot c_{ijk\ell}\theta_k\alpha_\ell,\quad&&\text{in }\mathbb{R}^3, \label{SPidentity2}\\
(c_s^2-c_p^2)w_{0}+2c_p^2\theta\cdot\nabla w_1&=2\partial_i\dot c_{ijk\ell}\theta_j\theta_k\alpha_\ell-c_s^2\alpha_i\partial_i\dot \rho,\quad&&\text{in }\mathbb{R}^3, \label{SPidentity3}\\
(c_s^2-c_p^2) w_1 &= -\dot c_{ijk\ell}\theta_i\theta_j\theta_k\alpha_\ell, \quad && \text{in }\mathbb{R}^3. \label{SPidentity4}
\end{alignat}
Since $c_p\neq c_s$, we just successively take
\[
w_1(x)=-\frac{1}{(c_s^2-c_p^2)}\dot c_{ijk\ell}(x)\theta_i\theta_j\theta_k\alpha_\ell,
\]
\[
w_{0}(x)=\frac{1}{c_s^2-c_p^2}(-2c_p^2\theta\cdot\nabla w_1(x)+2\partial_i\dot c_{ijk\ell}(x)\theta_j\theta_k\alpha_\ell-c_s^2\alpha_i\partial_i\dot \rho),
\]
\[
w_{-1}(x)=\frac{1}{c_s^2-c_p^2}\left(-2c_p^2\theta\cdot\nabla w_{0}+c_p^2\Delta w_1-\partial_i\partial_j\dot c_{ijk\ell}\theta_k\alpha_\ell\right),
\]
and 
\[
w_{-2}(x)=\frac{1}{c_s^2-c_p^2}\left(-2c_p^2\theta\cdot\nabla w_{-1}+c_p^2\Delta w_{0}\right).
\]
Note that by the construction above, $w_{j}$ for $j \in \{1,0,-1,-2\}$ are all smooth functions and compactly supported in $\Omega$. Then we let $w'$ satisfy the equation \eqref{SPidentity1} with $w'=0,t\ll 0$, which has a unique solution in $H^2_{\mathrm{loc}}$. This justifies our ansatz \eqref{SPansatz}.

Now we can write
\begin{equation} \label{SPansatz1}
    \nabla\cdot\dot{u}=w_1(x)\delta(c_st-\theta\cdot x)+v'(x,t) H_0(c_st-\theta\cdot x)+w'(x,t)
\end{equation}
by taking 
\begin{equation} \label{SPvprime}
    v'(x,t)=w_{0}(x)+(c_st-\theta\cdot x)w_{-1}(x)+(c_st-\theta\cdot x)^2w_{-2}(x).
\end{equation}
Then $v'$ and $w'$ solve
\[
\square_p(v'+w')=0,\quad \text{in }\{c_st>\theta\cdot x\},
\]
and
\[
\square_pw'=0,\quad \text{in }\{c_st<\theta\cdot x\}.
\]

\begin{figure}[htbp]
    \centering
    \includegraphics[width=0.6\textwidth]{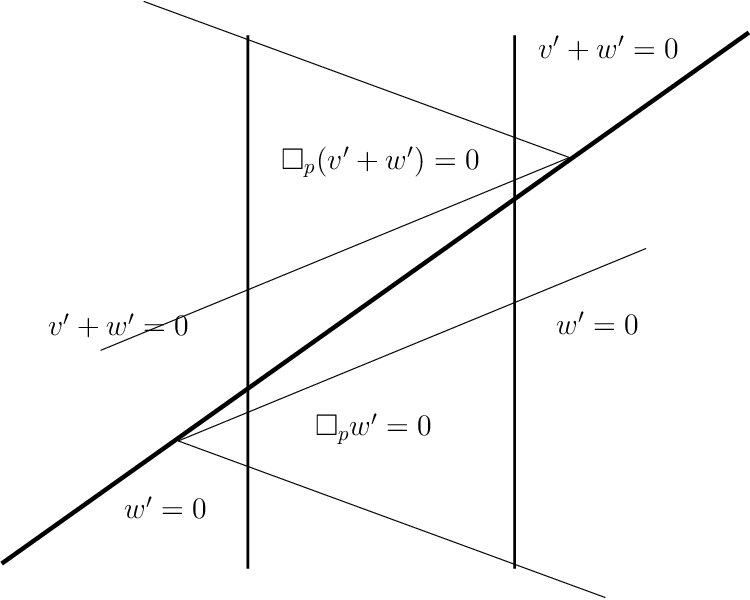}
    \caption{Unique continuation for $S$-$P$ scattering.}
    \label{fig:1}
\end{figure}
Notice that $\nabla\cdot\dot{u}=v'+w'$ in $\{c_st>\theta\cdot x\}$ and $\nabla\cdot\dot{u}=w'$ in $\{c_st<\theta\cdot x\}$. 
For the inverse problem, we start with the fact $\nabla\cdot\dot{u}=$ in $\Omega^c\times(-\infty,T]$.
By unique continuation for the Cauchy problem
\[
\begin{split}
\square_pw'&=0,\quad \text{in }\{c_st<\theta\cdot x\},\\
w'&=0,\quad \text{in }\{c_st<\theta\cdot x\}\cap (\Omega^c\times(-\infty,T]),
\end{split}
\]
we can conclude $w'\equiv 0$ in $\{c_st<\theta\cdot x\}$. This follows from a time-reversed version of the argument that we will give next.
By unique continuation for the Cauchy problem
\[
\begin{split}
\square_p(v'+w')&=0,\quad \text{in }\{c_st>\theta\cdot x\},\\
v'+w'&=0,\quad \text{in }\{c_st>\theta\cdot x\}\cap (\Omega^c\times(-\infty,T]),
\end{split}
\]
we have $v'+w'\equiv 0$ in $\{c_st>\theta\cdot x\}$.
See Fig.~\ref{fig:1} for an illustration.
Indeed, suppose that $(x_0, t_0)$ satisfies $c_s t_0 > \theta \cdot x_0$.
We apply Lemma~\ref{lem_ucp} with
\begin{equation*}
a = \theta \cdot x_0, \quad s = \frac{(c_p - c_s) a}{c_p c_s}.
\end{equation*}
With this choice of $(a,s)$, $\min(c_p(t - s), a) \le c_s t$ for all $t$ since 
either $a \le c_s t$ or $c_s t < a$, and in the latter case
\begin{equation*}
c_p (t - s) = c_p t - \frac{(c_p - c_s) a}{c_s}
< c_p t - (c_p - c_s) t = c_s t.
\end{equation*}
In particular, the set $R$ in Lemma~\ref{lem_ucp} is contained in the region $c_st>\theta\cdot x$, 
and we obtain $v'+w' = 0$ in $R$.
Moreover, $(x_0, t_0) \in \overline{R}$ since 
\begin{equation*}
c_p (t_0 - s) = c_p t_0 - \frac{(c_p - c_s) a}{c_s}
> c_p t_0 - (c_p - c_s) t_0 = c_s t_0 > \theta \cdot x_0.
\end{equation*}
By perturbing $(x_0, t_0)$ slightly, we conclude that $v'+w' = 0$ near $(x_0, t_0)$.

Notice that $v'$ is smooth and $w'$ is in $H^2_{\mathrm{loc}}$. Taking the traces on $\{c_st=\theta\cdot x\}$, we have $v'+w'=w'=0$ and $(\partial_t+c_p\theta\cdot \nabla)(v'+w')=(\partial_t+c_p\theta\cdot \nabla)w'=0$ on $\{c_st=\theta\cdot x\}$. This means that $v'=(\partial_t+c_p\theta\cdot \nabla)v'=0$ on $\{c_st=\theta\cdot x\}$ and thus $w_{0}(x)=w_{-1}(x)=0$ by \eqref{SPvprime}.

\medskip

Then the identities \eqref{SPidentity2} and \eqref{SPidentity3} become
\begin{alignat*}{3}
-c_p^2\Delta w_1&=-\partial_i\partial_j\dot c_{ijk\ell}\theta_k\alpha_\ell,\quad&\text{in }\mathbb{R}^3,\\
2c_p^2\theta\cdot\nabla w_1&=2\partial_i\dot c_{ijk\ell}\theta_j\theta_k\alpha_\ell-c_s^2\alpha_i\partial_i\dot \rho,\quad&\text{in }\mathbb{R}^3.
\end{alignat*}
Together with the equation \eqref{SPidentity4}, we obtain
\begin{alignat*}{2}
2\frac{c_p^2}{c_p^2-c_s^2}\theta\cdot\nabla \dot c_{ijk\ell}\theta_i\theta_j\theta_k\alpha_\ell&=2\partial_i\dot c_{ijk\ell}\theta_j\theta_k\alpha_\ell-c_s^2\alpha_i\partial_i\dot \rho,\quad&\text{in }\mathbb{R}^3,\\
\frac{c_p^2}{c_p^2-c_s^2}\Delta \dot c_{ijk\ell}\theta_i\theta_j\theta_k\alpha_\ell&=\partial_i\partial_j\dot c_{ijk\ell}\theta_k\alpha_\ell,\quad&\text{in }\mathbb{R}^3.
\end{alignat*}
In particular, if we take $\theta=\mathbf{e}_1,\alpha=\mathbf{e}_2$ in above equations, we have
\begin{alignat}{2}
    2\frac{c_p^2}{c_p^2-c_s^2}\partial_1\dot c_{1112}&=2\partial_i\dot c_{i112}-c_s^2\partial_2\dot \rho,\quad&\text{in }\mathbb{R}^3 \label{SP121},\\
    \frac{c_p^2}{c_p^2-c_s^2}\Delta \dot c_{1112}&=\partial_i\partial_j\dot c_{ij12},\quad&\text{in }\mathbb{R}^3. \label{SP122}
\end{alignat}
Taking $\theta=\mathbf{e}_2,\alpha=\mathbf{e}_1$, we have
\begin{alignat}{2}
    2\frac{c_p^2}{c_p^2-c_s^2}\partial_2\dot c_{2221}&=2\partial_i\dot c_{i221}-c_s^2\partial_1\dot \rho,\quad&\text{in }\mathbb{R}^3 \label{SP211},\\
    \frac{c_p^2}{c_p^2-c_s^2}\Delta \dot c_{2221}&=\partial_i\partial_j\dot c_{ij21},\quad&\text{in }\mathbb{R}^3. \label{SP212}
\end{alignat}
The equations \eqref{SP122} and \eqref{SP212} immediately imply that $\Delta(\dot c_{1112}-\dot c_{2221})=0$ in $\mathbb{R}^3$, and since $\mathbf{\dot C}$ is compactly supported in $\Omega$,
\[
\dot c_{1112}=\dot c_{2221},\quad\text{in }\mathbb{R}^3.
\]
Now we write \eqref{SP121} and \eqref{SP211} as
\begin{equation} \label{identity1}
    2\frac{c_s^2}{c_p^2-c_s^2}\partial_1\dot c_{1112}=2\partial_2\dot c_{2112}+2\partial_3\dot c_{3112}-c_s^2\partial_2\dot \rho,
\end{equation}
\begin{equation} \label{identity2}
    2\frac{c_s^2}{c_p^2-c_s^2}\partial_2\dot c_{2221}=2\partial_1\dot c_{1221}+2\partial_3\dot c_{3221}-c_s^2\partial_1\dot \rho.
\end{equation}

\subsection{Send $P$-wave, observe $S$-wave}
In this step, we take a $P$-wave solution
\[
u_0=\theta H_0(c_pt-\theta\cdot x),
\]
and observed the scattered $S$-waves.
Now we have
\[
\partial^2_t\dot{u}_i-(\lambda_0+\mu_0)\nabla_i\nabla\cdot \dot{u}-\mu_0\Delta \dot{u}_i=-\partial_j\dot c_{ijk\ell}\theta_k\theta_\ell\delta+\dot c_{ijk\ell}\theta_j\theta_k\theta_\ell\delta'-c_p^2\dot \rho\theta_i\delta'.
\]
We introduce the notation $e_{ijk}$ that is skew-symmetric and $e_{123}=1$.
Taking curl of both sides
\[
\begin{split}
\square_{s}(\nabla\times u)_i=&e_{ipq}\partial_q(-\partial_j\dot c_{pjk\ell}\theta_k\theta_\ell\delta({c_p}t-\theta\cdot x)+\dot c_{pjk\ell}\theta_j\theta_k\theta_\ell\delta')-c_p^2e_{ipq}\partial_q(\dot \rho\theta_p\delta')\\
=&-e_{ipq}\dot c_{pjk\ell}\theta_j\theta_k\theta_\ell\theta_q\delta''+e_{ipq}\partial_q\dot c_{pjk\ell}\theta_j\theta_k\theta_\ell\delta'+e_{ipq}\partial_j\dot c_{pjk\ell}\theta_k\theta_\ell\theta_q\delta'-e_{ipq}\partial_q\partial_j\dot c_{pjk\ell}\theta_k\theta_\ell\delta.\\
&-c_p^2e_{ipq}\partial_q\dot \rho\theta_p\delta'+c_p^2e_{ipq}\dot \rho\theta_p\theta_q\delta''.
\end{split}
\]
In this subsection the argument for $\delta'',\delta',\delta,H_j$ is always $c_pt-\theta\cdot x$ and we occasionally omit it.
We use the ansatz
\begin{equation} \label{PSansatz}
\nabla\times\dot{u}=w_1(x)\delta(c_pt-\theta\cdot x)+w_{0}(x) H_0+w_{-1}(x) H_1+w_{-2}(x) H_2+w'(x,t),
\end{equation}
where $v,w$ are all smooth functions. Insert into the equation above,
\[
\begin{split}
&c_p^2w_{1i}\delta''-c_s^2\left[\Delta w_{1i}\delta-2\theta\cdot\nabla w_{1i}\delta'+w_{1i}\delta''\right]\\
&+c_p^2w_{0i}\delta'-c_s^2\left[\Delta w_{0i}H_0-2\theta\cdot\nabla w_{0i}\delta+w_{0i}\delta'\right]\\
&+c_p^2w_{-1i}\delta-c_s^2\left[\Delta w_{-1i}H_1-2\theta\cdot\nabla w_{-1i}H_0+w_{-1i}\delta\right]\\
&+c_p^2w_{-2i}H_0-c_s^2\left[\Delta w_{-2i}H_2-2\theta\cdot\nabla w_{-2i}H_1+w_{-2i}H_0\right]\\
&+\square_sw'_i\\
=&e_{ipq}\dot c_{pjk\ell}\theta_j\theta_k\theta_\ell\theta_q\delta''+e_{ipq}\partial_q\dot c_{pjk\ell}\theta_j\theta_k\theta_\ell\delta'+e_{ipq}\partial_j\dot c_{pjk\ell}\theta_k\theta_\ell\theta_q\delta'-e_{ipq}\partial_q\partial_j\dot c_{pjk\ell}\theta_k\theta_\ell\delta\\
&-c_p^2e_{ipq}\partial_q\dot \rho\theta_p\delta'.
\end{split}
\]
Now we take $w_1,w_0,w_{-1},w_{-2}$ such that
\begin{alignat}{3}
    (c_p^2-c_s^2)w_{1i}&=-e_{ipq}\dot c_{pjk\ell}\theta_j\theta_k\theta_\ell\theta_q,&\quad\text{in }\mathbb{R}^3, \label{PSidentity1}\\
    (c_p^2-c_s^2)w_{0i}&=-2c_s^2\theta\cdot\nabla w_{1i}+e_{ipq}\partial_q\dot c_{pjk\ell}\theta_j\theta_k\theta_\ell+e_{ipq}\partial_j\dot c_{pjk\ell}\theta_k\theta_\ell\theta_q \label{PSidentity2}\\
    &\quad\quad-c_p^2e_{ipq}\partial_q\dot \rho\theta_p,\quad&\text{in }\mathbb{R}^3,\nonumber\\
    (c_p^2-c_s^2)w_{-1i}&=c_s^2\Delta w_{1i}-2c_s^2\theta\cdot\nabla w_{0i}-e_{ipq}\partial_q\partial_j\dot c_{pjk\ell}\theta_k\theta_\ell,&\quad\text{in }\mathbb{R}^3, \label{PSidentity3}\\
    (c_p^2-c_s^2)w_{-2i}&=c_s^2\Delta w_{0i}-2c_s^2\theta\cdot\nabla w_{-1i},&\quad\text{in }\mathbb{R}^3, \label{PSidentity4}
\end{alignat}
The function $w'$ satisfies the equation
\[
\begin{split}
\square_sw'-c_s^2\Delta w_{-1}H_1-c_s^2\Delta w_{-2}H_2+2c_s^2\theta\cdot\nabla w_{-2}H_1&=0,\quad \text{in }\mathbb{R}^3\times\mathbb{R},\\
w'&=0,\quad t\ll 0.
\end{split}
\]
Then above equation has a unique solution $w'\in H^2_{\mathrm{loc}}$ such that $w'=0$ in $\{c_pt<\theta\cdot x\}$. This justifies our ansatz \eqref{PSansatz}.

Therefore we can write
\[
\nabla\times\dot{u}=w_1(x)\delta(c_pt-\theta\cdot x)+v'(x,t) H_0,
\]
by taking 
\[
v'(x,t)=w_{0}(x)+w_{-1}(x)(c_pt-\theta\cdot x)+w_{-2}(x)(c_pt-\theta\cdot x)^2+w'(x,t),
\]
which is in $H^2_{\mathrm{loc}}$. Then $v'$ solves
\[
\square_sv'=0,\quad \text{in }\{c_pt>\theta\cdot x\}.
\]
\begin{figure}[htbp]
    \centering
    \includegraphics[width=0.6\textwidth]{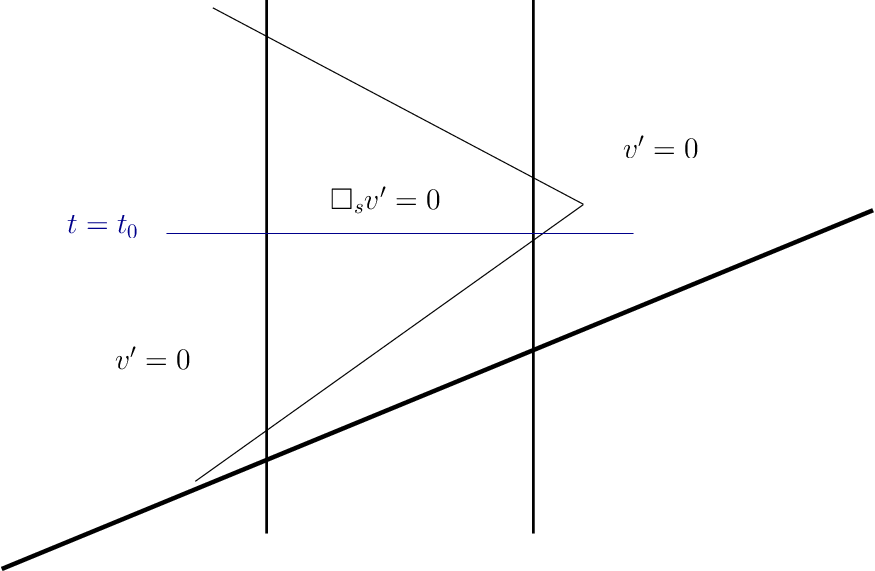}
    \caption{Unique continuation for $P$-$S$ scattering.}
    \label{fig:2}
\end{figure}

For the inverse problem, if $\nabla\times\dot{u}=0$ in $\Omega^c\times(-\infty,T]$, we have
\[
\begin{split}
\square_sv'&=0,\quad \text{in }\{c_pt>\theta\cdot x\},\\
v'&=0,\quad \text{in }\{c_pt>\theta\cdot x\}\cap (\Omega^c\times(-\infty,T]).
\end{split}
\]
We apply Lemma \ref{lem_ucp} with large enough $a, s > 0$. Then $R \cap (\Omega \times \R)$ is in the region $c_pt>\theta\cdot x$ and contains a neighborhood of $\Omega \times \{t_0\}$ for some $t_0 \in \R$. So $v'=0$ in a neighborhood of $\Omega \times \{t_0\}$. We conclude that $v'=0$ in $\{c_pt>\theta\cdot x\}$ by noticing that $v' = 0$ is the unique solution to
\begin{align}
\Box_s v' &= 0,\quad \text{in } \{c_pt>\theta\cdot x\} \cap (\Omega \times (-\infty, t_0)),\\
v'&=0,\quad \text{in }\{c_pt>\theta\cdot x\}\cap (\partial \Omega\times (-\infty, t_0)),\\
v' = \partial_t v' &= 0, \quad \text{in } \Omega\times \{t_0\}.
\end{align}

By continuity, we obtain $v'=(\partial_t+c_s\theta\cdot\nabla)v'=0$ on $\{c_pt=\theta\cdot x\}$. This implies that
\[
w_{0}(x)=w_{-1}(x)=0\quad\text{in }\mathbb{R}^3.
\]
See Fig.~\ref{fig:2} for an illustration.
Then the equations \eqref{PSidentity2} and \eqref{PSidentity3} become
\begin{alignat*}{2}
2c_s^2\theta\cdot\nabla w_{1i}&=e_{ipq}\partial_q\dot c_{pjk\ell}\theta_j\theta_k\theta_\ell+e_{ipq}\partial_j\dot c_{pjk\ell}\theta_k\theta_\ell\theta_q-c_p^2e_{ipq}\partial_q\dot \rho\theta_p,&&\quad\text{in }\mathbb{R}^3\\
c_s^2\Delta w_{1i}&=e_{ipq}\partial_q\partial_j\dot c_{pjk\ell}\theta_k\theta_\ell,&&\quad\text{in }\mathbb{R}^3.
\end{alignat*}
Together with the expression $w_{1i}=-\frac{1}{c_p^2-c_s^2}e_{ipq}\dot c_{pjk\ell}\theta_j\theta_k\theta_\ell\theta_q$, we obtain
\begin{alignat*}{1}
-2\frac{c_s^2}{c_p^2-c_s^2}e_{ipq}\theta\cdot\nabla \dot c_{pjk\ell}\theta_j\theta_k\theta_\ell\theta_q&=e_{ipq}\partial_q\dot c_{pjk\ell}\theta_j\theta_k\theta_\ell+e_{ipq}\partial_j\dot c_{pjk\ell}\theta_k\theta_\ell\theta_q-c_p^2e_{ipq}\partial_q\dot \rho\theta_p,\\
\frac{c_s^2}{c_p^2-c_s^2}e_{ipq}\Delta \dot c_{pjk\ell}\theta_j\theta_k\theta_\ell\theta_q&=-e_{ipq}\partial_q\partial_j\dot c_{pjk\ell}\theta_k\theta_\ell.
\end{alignat*}
Take $\theta=\mathbf{e}_1$ in above identities, we obtain
\begin{alignat}{1} \label{PS112}
    i=1:\quad\quad\quad\quad e_{123}\partial_3\dot c_{2111}+e_{132}\partial_2\dot c_{3111}&=0,\\
    -e_{123}\partial_3\partial_j\dot c_{2j11}-e_{132}\partial_2\partial_j\dot c_{3j11}&=0,
\end{alignat}
\begin{alignat}{1}
    i=2:\quad -2\frac{c_s^2}{c_p^2-c_s^2}e_{231}\partial_1\dot c_{3111}&=e_{213}\partial_3\dot c_{1111}+e_{231}\partial_1\dot c_{3111}+e_{231}\partial_j\dot c_{3j11}-c_p^2e_{213}\partial_3\dot \rho,\\
    \frac{c_s^2}{c_p^2-c_s^2}e_{231}\Delta \dot c_{3111}&=-e_{213}\partial_3\partial_j\dot c_{1j11}-e_{231}\partial_1\partial_j\dot c_{3j11}, \label{PS122}
\end{alignat}
\begin{alignat}{1}
    i=3:\quad -2\frac{c_s^2}{c_p^2-c_s^2}e_{321}\partial_1\dot c_{2111}&=e_{312}\partial_2\dot c_{1111}+e_{321}\partial_1\dot c_{2111}+e_{321}\partial_j\dot c_{2j11}-c_p^2e_{312}\partial_2\dot \rho, \label{PS131}\\
    \frac{c_s^2}{c_p^2-c_s^2}e_{321}\Delta \dot c_{2111}&=-e_{312}\partial_2\partial_j\dot c_{1j11}-e_{321}\partial_1\partial_j\dot c_{2j11}. \label{PS132}
\end{alignat}
Notice that the above six equations are linearly dependent, and two of them are redundant. 

\subsection{Send $S$-wave, observe $S$-wave}\label{sectSS}
Take an $S$-wave solution
\[
u_0=\alpha H_0(c_st-\theta\cdot x),\quad \alpha\perp\theta,
\]
as the incident wave,
we have
\[
\partial^2_t\dot{u}_i-(\lambda_0+\mu_0)\nabla_i\nabla\cdot \dot{u}-\mu_0\Delta \dot{u}_i=-\partial_jc_{ijk\ell}\theta_k\alpha_\ell\delta+c_{ijk\ell}\theta_j\theta_k\alpha_\ell\delta'-c_s^2\rho\alpha_i\delta'.
\]
Taking curl of both sides
\[
\begin{split}
\square_s(\nabla\times u)_i=&-e_{ipq}\dot c_{pjk\ell}\theta_j\theta_k\alpha_\ell\theta_q\delta''+e_{ipq}\partial_q\dot c_{pjk\ell}\theta_j\theta_k\alpha_\ell\delta'+e_{ipq}\partial_j\dot c_{pjk\ell}\theta_k\alpha_\ell\theta_q\delta'-e_{ipq}\partial_q\partial_j\dot c_{pjk\ell}\theta_k\alpha_\ell\delta\\
&-c_s^2e_{ipq}\partial_q\dot \rho\alpha_p\delta'+c_s^2e_{ipq}\dot \rho\alpha_p\theta_q\delta''.
\end{split}
\]
By the same consideration as for $PP$-scattering, we can use the ansatz
\[
\nabla\times\dot{u}=w_2(x)\delta'+w_1(x)\delta+v'(x,t) H_0.
\]
with $v'\in H^2_{\mathrm{loc}}$.
We end up with
\begin{alignat}{3}
    \partial^2_tv'-c_s^2\Delta v'&=0,&&\quad \text{in }\{c_st>\theta\cdot x\},\label{SSidentity1}\\
    2c_s(\partial_tv'+c_s\theta\cdot\nabla v')-c_s^2\Delta w_1&=-e_{ipq}\partial_q\partial_j\dot c_{pjk\ell}\theta_k\alpha_\ell,&&\quad\text{on }\{c_st=\theta\cdot x\},\label{SSidentity2}\\
    2c^2_s\theta\cdot\nabla w_1-c_s^2\Delta w_2&=e_{ipq}\partial_q\dot c_{pjk\ell}\theta_j\theta_k\alpha_\ell+e_{ipq}\partial_j\dot c_{pjk\ell}\theta_k\alpha_\ell\theta_q\label{SSidentity3}\\
    &\quad\quad-c_s^2e_{ipq}\partial_q\dot \rho\alpha_p,&&\quad \text{in }\mathbb{R}^3,\nonumber\\
    2c_s^2\theta\cdot\nabla w_2&=-e_{ipq}\dot c_{pjk\ell}\theta_j\theta_k\alpha_\ell\theta_q+c_s^2e_{ipq}\dot \rho\alpha_p\theta_q,&&\quad \text{in }\mathbb{R}^3.\label{SSidentity4}
\end{alignat}

Similar to the consideration for the case of $PP$-scattering. applying the unique continuation principle to the Cauchy problem
\[
\begin{split}
\square_sv'&=0,\quad \text{in }\{c_st>\theta\cdot x\},\\
v'&=0,\quad \text{in }\{c_st>\theta\cdot x\}\cap (\Omega^c\times(-\infty,T]),
\end{split}
\]
we conclude that $v'=0$ on $\{c_st=\theta\cdot x\}$. Then
\[
c_s^2\Delta w_1=e_{ipq}\partial_q\partial_j\dot c_{pjk\ell}\theta_k\alpha_\ell
\]
by equation \eqref{SSidentity2}. Together with \eqref{SSidentity3} and \eqref{SSidentity4} we end up with the identity
\[
\begin{split}
&4e_{ipq}(\theta\cdot\nabla)^2\partial_q\partial_j\dot c_{pjk\ell}\theta_k\alpha_\ell+e_{ipq}\Delta^2\dot c_{pjk\ell}\theta_j\theta_k\alpha_\ell\theta_q-c_s^2e_{ipq}\Delta^2\dot \rho\alpha_p\theta_q\\
=&2e_{ipq}(\theta\cdot\nabla)\Delta\partial_q\dot c_{pjk\ell}\theta_j\theta_k\alpha_\ell+2e_{ipq}(\theta\cdot\nabla)\Delta\partial_j\dot c_{pjk\ell}\theta_k\alpha_\ell\theta_q-2c_s^2e_{ipq}(\theta\cdot\nabla)\Delta\partial_q\dot \rho\alpha_p.
\end{split}
\]

Take $\theta=\mathbf{e}_1$ and $\alpha=\mathbf{e}_2$ in the above equation.
\begin{equation} \label{SS121}
\begin{split}
    4e_{123} \partial^2_1 \partial_3 \partial_j \dot c_{2j12} + & 4e_{132} \partial^2_1 \partial_2 \partial_j \dot c_{3j12} \\
    & = 2e_{123} \partial_1 \partial_3 \Delta \dot c_{2112} + 2e_{132} \partial_1 \Delta \partial_2 \dot c_{3112} - 2c_s^2 e_{123} \partial_1 \partial_3 \Delta \dot \rho,
\end{split}
\end{equation}
\begin{equation}
\begin{split}
    4e_{213} \partial_1^2 \partial_3 \partial_j \dot c_{1j12} + & 4e_{231} \partial_1^3 \partial_j \dot c_{3j12} + e_{231} \Delta^2 \dot c_{3112} \\
    & = 2e_{213} \partial_1 \Delta \partial_3 \dot c_{1112} + 2e_{231} \partial_1 \Delta \partial_1 \dot c_{3112} + 2e_{231} \partial_1 \Delta \partial_j \dot c_{3j12},
\end{split}
\end{equation}
\begin{equation} \label{SS123}
\begin{split}
    4e_{312} \partial_1^2 \partial_2 \partial_j \dot c_{1j12} + & 4e_{321} \partial_1^3 \partial_j \dot c_{2j12} + e_{321} \Delta^2 \dot c_{2112} - c_s^2e_{321} \Delta^2 \dot \rho \\
    &= 2e_{312} \partial_1 \Delta \partial_2 \dot c_{1112} + 2e_{321} \partial_1 \Delta \partial_1 \dot c_{2112} \\
    & \quad + 2e_{321} \partial_1 \Delta \partial_j \dot c_{2j12} - 2c_s^2 e_{321} \partial_1^2 \Delta \dot \rho.
\end{split}
\end{equation}
We note that the above three equations are linearly dependent, and one of them is redundant.

Taking $\theta=\mathbf{e}_2$ and $\alpha=\mathbf{e}_1$, we have
\begin{equation} \label{SS211}
\begin{split}
4e_{123}\partial^2_2\partial_3\partial_j\dot c_{2j21}+&4e_{132}\partial^3_2\partial_j\dot c_{3j21}+e_{132}\Delta^2\dot c_{3221}\\
&=2e_{123}\partial_2\Delta\partial_3\dot c_{2221}+2e_{132}\partial_2\Delta\partial_2\dot c_{3221}+2e_{132}\partial_2\Delta\partial_j\dot c_{3j21},
\end{split}
\end{equation}
\begin{equation} \label{SS212}
4e_{213}\partial^2_2\partial_3\partial_j\dot c_{1j21}+4e_{231}\partial^2_2\partial_1\partial_j\dot c_{3j21}=2e_{213}\partial_2\Delta\partial_3\dot c_{1221}+2e_{231}\partial_2\Delta\partial_1\dot c_{3221}-2c_s^2e_{213}\partial_2\partial_3\Delta\dot \rho,
\end{equation}
\begin{equation} \label{SS223}
\begin{split}
    4e_{321} \partial_2^2 \partial_1 \partial_j \dot c_{2j21} + & 4e_{312} \partial_2^3 \partial_j \dot c_{1j21} + e_{312} \Delta^2 \dot c_{1221} - c_s^2 e_{312} \Delta^2 \dot \rho \\
    & = 2e_{321} \partial_2 \Delta \partial_1 \dot c_{2221} + 2e_{312} \partial_2 \Delta \partial_2 \dot c_{1221} \\
    & \quad + 2e_{312} \partial_2 \Delta \partial_j \dot c_{1j21} - 2 c_s^2 e_{312} \partial_2^2 \Delta \dot \rho.
\end{split}
\end{equation}

\subsection{Uniqueness of the parameters}
Keep in mind that all the parameters $\dot c_{ijk\ell}$ are compactly supported.
Applying $\partial_1^{-1}$ to \eqref{SS121} and $\partial_2^{-1}$ to \eqref{SS212}, we get
\[
4\partial_1\partial_3\partial_j\dot c_{2j12}-4\partial_1\partial_2\partial_j\dot c_{3j12}=2\partial_3\Delta (\dot c_{2112}-c_s^2\dot \rho)-2\Delta\partial_2\dot c_{3112}
\]
and
\[
4\partial_2\partial_3\partial_j\dot c_{1j21}-4\partial_2\partial_1\partial_j\dot c_{3j21}=2\partial_3\Delta (\dot c_{2112}-c_s^2\dot \rho)-2\Delta\partial_1\dot c_{3221},
\]
since $\dot{\mathbf{C}}$ is compactly supported.
The above two equations lead to
\[
2\partial_1\partial_3\partial_j\dot c_{2j12}-2\partial_2\partial_3\partial_j\dot c_{1j21}=\Delta(\partial_1\dot c_{3221}-\partial_2\dot c_{3112}).
\]
Inserting \eqref{SP121} and \eqref{SP211} into the above equation, we have
\[
\frac{2c_p^2}{c_p^2-c_s^2}(\partial_1\partial_3\partial_2\dot c_{2221}-\partial_2\partial_3\partial_1\dot c_{1112})=\Delta(\partial_1\dot c_{3221}-\partial_2\dot c_{3112}).
\]
Since we already have $\dot c_{2221}=\dot c_{1112}$, we now have $\Delta(\partial_1\dot c_{3221}-\partial_2\dot c_{3112})=0$ in $\mathbb{R}^3$ and consequently 
\begin{equation} \label{identity3221}
    \partial_1\dot c_{3221}-\partial_2\dot c_{3112}=0.
\end{equation}
Using again \eqref{identity1} and \eqref{identity2}, which imply that
\[
\partial_2^2(2\dot c_{1212}-c_s^2\dot \rho)+2\partial_2\partial_3\dot c_{1213}=\partial_1^2(2\dot c_{1212}-c_s^2\dot \rho)+2\partial_1\partial_3\dot c_{1223},
\]
we have
\[
(\partial^2_1-\partial_2^2)(2\dot c_{1212}-c_s^2\dot \rho)=0.
\]
So 
\[
2\dot c_{1212}-c_s^2\dot \rho=0.
\]
Now \eqref{SS123} reduces to
\[
\begin{split}
&4\partial_1^3\partial_2\dot c_{1112}+4\partial^2_1\partial_2\partial_3\dot c_{1312}-4\partial_1^3\partial_2\dot c_{2212}-4\partial_1^3\partial_3\dot c_{2312}+4\partial_1^2\partial^2_2\dot c_{1212}-4\partial^4_1\dot c_{1212}+\Delta^2\dot c_{1212}\\
=&2\Delta\partial_1\partial_2\dot c_{1112}-2\Delta\partial_1\partial_2\dot c_{2212}-2\Delta\partial_1\partial_3\dot c_{2312}.
\end{split}
\]
Recalling that $\dot c_{1112}-\dot c_{2212}=0$ and $\partial_2\dot c_{1312}-\partial_1\dot c_{2312}=0$, the above identity leads to 
\begin{equation} \label{eqc1212_1}
    2\Delta\partial_1\partial_3\dot c_{2312}+4\partial_1^2\partial^2_2\dot c_{1212}-4\partial^4_1\dot c_{1212}+\Delta^2\dot c_{1212}=0.
\end{equation}
Similar consideration applied to \eqref{SS223} gives
\begin{equation} \label{eqc1212_2}
    2\Delta\partial_2\partial_3\dot c_{1312}+4\partial_1^2\partial^2_2\dot c_{1212}-4\partial^4_2\dot c_{1212}+\Delta^2\dot c_{1212}=0.
\end{equation}
Subtracting \eqref{eqc1212_2} from \eqref{eqc1212_1}, we obtain
\[
(\partial_1^4-\partial_2^4)\dot c_{1212}=0,
\]
from which we get
\[
\dot c_{1212}=0.
\]
Then $\dot \rho=0$ and also, by \eqref{eqc1212_1} and \eqref{eqc1212_2},
\[
\Delta\partial_1\partial_3\dot c_{2312}=\Delta\partial_2\partial_3\dot c_{1312}=0
\]
Consequently
\[
\dot c_{2312}=\dot c_{3112}=0.
\]
Using again \eqref{identity1} and \eqref{identity2}, we get $\partial_1\dot c_{1112}=\partial_2\dot c_{2221}=0$ and then
\[
\dot c_{1112}=\dot c_{2221}=0.
\]
Now we already have $\dot c_{1112}=\dot c_{1212}=\dot c_{2212}=\dot c_{1312}=\dot c_{2312}=0$.
Using \eqref{SP122}, we have
\[
\dot c_{3312}=0.
\]
Summarizing above results, we have
\[
\dot c_{ij12}=0,\quad i,j=1,2,3.
\]

By the same consideration with different choices of $\theta=\mathbf{e}_i$ and $\alpha=\mathbf{e}_j$, with $i,j=1,2,3$, $i\neq j$, we can also have
\[
\dot c_{ij13}=\dot c_{ij23}=0,\quad i,j=1,2,3.
\]
The identities \eqref{PS122} and \eqref{PS132} then reduce to
\[
\begin{split}
\partial_3\partial_1\dot c_{1111}-\partial_1\partial_3\dot c_{3311}=0,\\
\partial_2\partial_1\dot c_{1111}-\partial_1\partial_2\dot c_{2211}=0.
\end{split}
\]
This yields
\[
\dot c_{1111}=\dot c_{2211}=\dot c_{3311}.
\]
Now the equation \eqref{PP1} reduces to
\[
4\partial^2_1\Delta \dot c_{1111}+\Delta^2\dot c_{1111}=4\partial^2_1\Delta \dot c_{1111}.
\]
So
\[
\Delta^2\dot c_{1111}=0,
\]
and consequently $\dot c_{2211}=\dot c_{3311}=\dot c_{1111}=0$. 
%
Similarly, we have
\[
\dot c_{2222}=\dot c_{3333}=\dot c_{2233}=0.
\]
Now we have shown that all components of $\mathbf{\dot C}$ and $\dot \rho$ vanish. This proves the injectivity of $\mathcal{A}'[\mathbf{C}_0,1]$, while we have only used $\theta,\alpha$ to be $\mathbf{e}_j$'s.

\section{Rigidity of an isotropic perturbation}\label{Rigidity}
In this section we consider the isotropic case, that is, when
\[
\mathbf{ C}={\lambda}\delta_{ij}\delta_{k\ell}+{\mu}(\delta_{ik}\delta_{j\ell}+\delta_{i\ell}\delta_{jk}),
\]
and prove Theorem \ref{thm:2-EL25}. 
Let us first go back to the linearized problem, but now we need to give it a quantitative study. For this isotropic case, we do not need to use $P$-$S$ or $S$-$P$ scattering. Also, we will only use incident waves in a single direction.

\subsection{Send $P$-wave, obverse $P$-wave} \label{subsec:pW-EL25}

By using a $P$-wave 
$
u_0=\theta H_0(c_p t - \theta \cdot x)
$ as the incident wave,
the total wave field $u$ satisfies the equation 
\begin{equation} \label{eq:e1-EL25}
	\left\{\begin{aligned}
		& (\rho \partial_t^2 - \mathcal L_{\lambda, \mu}) u = 0, && \quad \text{in} \quad \R^{3} \times \R, \\
		& u = \theta H_0(c_p t - \theta \cdot x), && \quad \text{for} \quad t \ll 0,
	\end{aligned}\right.
\end{equation}
where
\(
\mathcal L_{\lambda, \mu} u = \mu \Delta u + (\lambda + \mu) \nabla \nabla \cdot u,
\)
and $|\theta| = 1$.
By linearizing $(\rho, \lambda, \mu)$ at $(\rho_0 = 1, \lambda_0, \mu_0)$, we obtain
\begin{equation*}
	\left\{\begin{aligned}
		(\partial_t^2 - \mathcal L_{\lambda_0, \mu_0}) \dot{u} & = (\mathcal L_{\dot \lambda, \dot \mu} - \dot \rho \partial_t^2) u_0, && \quad \text{in} \quad \R^{3} \times \R, \\
		\dot{u} & = 0, && \quad \text{for} \quad t \ll 0.
	\end{aligned}\right.
\end{equation*}

We compute
\begin{align}
	\mathcal L_{\dot\lambda, \dot\mu} u_0
	& = - \nabla\cdot (\dot\lambda \delta I + 2\dot\mu (\theta\times\theta) \delta)
	= - \nabla (\dot\lambda \delta) - 2 \theta (\theta \cdot \nabla(\dot\mu \delta)) \nonumber \\
	& = - [(\nabla \dot\lambda) \delta - \dot\lambda \theta \delta'] - [2 (\theta \cdot \nabla \dot\mu) \theta \delta - 2\dot\mu \theta \delta'] \nonumber \\
	& = (\dot\lambda + 2\dot\mu) \theta \delta' - [(\nabla \dot\lambda) + 2 \theta (\theta \cdot \nabla \dot\mu)] \delta, \label{eq:lmu0-EL25}
\end{align}
and
\[
\dot \rho \partial_t^2 u_0
= \dot \rho c_p^2 \theta \delta'.
%
\]
So the single scattered wave $\dot u$ solves the following equation.
\begin{equation} \label{eq:dup-EL25}
	\left\{\begin{aligned}
		(\partial_t^2 - \mathcal L_{\lambda_0, \mu_0}) \dot{u} & = (\dot \lambda + 2\dot \mu - \dot \rho c_p^2) \theta \delta' - [\nabla \dot \lambda + 2 \theta (\theta \cdot \nabla \dot \mu)] \delta, && \quad \text{in} \quad \R^{3} \times \R, \\
		\dot{u} & = 0, && \quad \text{for} \quad t \ll 0,
	\end{aligned}\right.
\end{equation}
We consider $\nabla \cdot \dot{u}$ which encodes the compressional wave.
By taking the divergence of both sides, we have
\begin{equation} \label{eq:lmua-EL25}
	\left\{\begin{aligned}
		\square_p (\nabla \cdot \dot{u}) & = - [\Delta \dot \lambda + 2 (\theta \cdot \nabla)^2 \dot \mu] \delta \\
		& \quad + (\theta \cdot \nabla) (2\dot \lambda + 4\dot \mu - \dot \rho c_p^2) \delta' \\
		& \quad - (\dot \lambda + 2\dot \mu - \dot \rho c_p^2) \delta'', && \quad \text{in} \quad \R^{3} \times \R, \\
		\nabla \cdot \dot{u} & = 0, && \quad \text{for} \quad t \ll 0,
	\end{aligned}\right.
\end{equation}

By the discussion in Section \ref{secPP}, we can use the following ansatz
\begin{equation*}
	\nabla \cdot \dot{u}
	= w_2(x) \delta'(c_p t - \theta \cdot x) + w_1(x) \delta(c_p t - \theta \cdot x) + w_0(x,t) H_0(c_p t - \theta \cdot x),
\end{equation*}
with $w_2(x) = w_1(x) = 0$ when $\theta \cdot x \ll 0$ and $w_0(x,t) = 0$ when $t \ll 0$.
By computation, we have
\begin{align}
	\square_p (\nabla \cdot \dot{u})
	& = (\square_p w_0) H_0 + (L_p w_0 - c_p^2\Delta w_1) \delta + (2c_p^2\theta\cdot\nabla w_1- c_p^2\Delta w_2) \delta' + (2c_p^2\theta\cdot\nabla w_2) \delta'', \label{eq:PP-EL25}
\end{align}
where
\(
L_p:= 2c_p (\partial_t + c_p \theta \cdot \nabla).
\)
Note that when acting on functions independent of $t$ such as $w_1$ and $w_2$, $L_p$ will be the same as $2c_p^2 \theta \cdot \nabla$, so we also use the notation
\[
L_\theta := \theta \cdot \nabla
\]
when it is appropriate.
Comparing different singularities in \eqref{eq:lmua-EL25} and \eqref{eq:PP-EL25} gives
\begin{equation*}
	\left\{\begin{aligned}
		\square_p w_0 & = 0, && \text{in} \ \ \{ c_p t - \theta \cdot x > 0 \}, \\
		L_p w_0 - c_p^2\Delta w_1 & = - [\Delta \dot \lambda + 2 (\theta \cdot \nabla)^2 \dot \mu] =: F_0, && \text{on} \ \ \{ c_p t - \theta \cdot x = 0 \}, \\
		2c_p^2\theta\cdot\nabla w_1 - c_p^2\Delta w_2 & = (\theta \cdot \nabla) (2\dot \lambda + 4\dot \mu - \dot \rho c_p^2) =: F_1, && \text{in} \ \mathbb{R}^3, \\
		2c_p^2\theta\cdot\nabla w_2 & = - (\dot \lambda + 2\dot \mu - \dot \rho c_p^2) =: F_2, && \text{in} \ \mathbb{R}^3.
	\end{aligned}\right.
\end{equation*}
The last two equations on $\mathbb{R}^3$ are transport equations for $w_1(x),w_2(x)$.
Note that $F_1$ and $F_2$ are compactly supported, so we can successively solve the transport equations to determine $w_2,w_1$.
That means that we can define the inverse of $L_\theta$, e.g.,~$L_\theta^{-1}$ and write
\[
w_2=\frac{1}{2c_p^2}L_\theta^{-1}F_2,\quad w_1=\frac{1}{2c_p^2}L_\theta^{-1}(c_p^2\Delta w_2+F_1).
\]
So we can simplify the system as follows,
\begin{equation} \label{eq:dpw-EL25}
	\left\{\begin{aligned}
		\square_p w_0 & = 0, && \text{in} \ \ \{ c_p t - \theta \cdot x > 0 \}, \\
		L_p w_0 & = \sum_{r=0}^2 \left(\frac \Delta 2 L_\theta^{-1}\right)^r F_r, && \text{on} \ \ \{ c_p t - \theta \cdot x = 0 \}.
	\end{aligned}\right.
\end{equation}

Recall $\Omega := \{ x \in \mathbb{R}^3 : |x|<1 \}$.
Denote
\[
\Sigma_{T,p} := \Sigma_{T,p,\theta} := \big( \partial \Omega \times (-\infty,T] \big) \cap \{ c_p t - \theta \cdot x > 0 \}
\]
and
\[
\mathcal D_p(\theta) := \mathcal D_p(\dot \lambda, \dot \mu, \dot \rho;\theta) := \sum_{r=0}^2 \left(\frac \Delta 2 L_\theta^{-1}\right)^r F_r.
\]
Applying \cite[Proposition 1.4]{ma2023fixed} to \eqref{eq:dpw-EL25}, we can have
\begin{equation} \label{eq:est1-EL25}
	\norm{\mathcal D_p(\theta)}_{L^2(\Omega)} \lesssim \norm{w_0(\cdot,\cdot;\theta,\theta)}_{H^2(\Sigma_{T,p})}
    = \norm{\nabla \cdot \dot u(\cdot,\cdot;\theta,\theta)}_{H^2(\Sigma_{T,p})}.
\end{equation}
Here $w_0(\cdot,\cdot;\theta,\theta)$ signifies $w_0$ with an emphasis on the dependence of $w_0$ on $\theta$.
The last equal sign in \eqref{eq:est1-EL25} is due to the fact that $\nabla \cdot \dot{u} = w_0$ on $\Sigma_{T,p}$.

We directly compute
\begin{align}
    L_\theta^2 \mathcal D_p(\theta)
	& = - (L_\theta^2\Delta \dot \lambda + 2 L_\theta^4 \dot \mu) + \frac \Delta 2  L^2_\theta (2\dot \lambda + 4\dot \mu - \dot \rho c_p^2) -\frac{\Delta^2}{4}(\dot \lambda + 2\dot \mu - \dot \rho c_p^2) \nonumber \\
	& = - \frac {\Delta^2} 4 \dot \lambda + 2 \left( L_\theta^2\Delta - L_\theta^4 - \frac {\Delta^2} 4\right) \dot \mu + c_p^2 \left( \frac{\Delta^2}{4} - \frac {L_\theta^2\Delta} 2 \right) \dot \rho. \label{eq:Dp-EL25}
\end{align}

\subsection{Send $S$-wave, observe $S$-wave} \label{subsec:sW-EL25}

By using a shear wave $
u_0=\alpha H_0(c_s t - \theta \cdot x)
$ as the incident wave, we have
\begin{equation} \label{eq:e1s-EL25}
	\left\{\begin{aligned}
		& (\rho \partial_t^2 - \mathcal L_{\lambda, \mu}) u = 0, && \quad \text{in} \quad \R^{3} \times \R, \\
		& u = \alpha H_0(c_s t - \theta \cdot x), && \quad \text{for} \quad t \ll 0,
	\end{aligned}\right.
\end{equation}
with $|\alpha| = 1$ and $\alpha \perp \theta$.
Similar to Section \ref{subsec:pW-EL25}, we now have
\begin{equation} \label{eq:dus-EL25}
	\left\{\begin{aligned}
		(\partial_t^2 - \mathcal L_{\lambda_0, \mu_0}) \dot{u} & = (\dot \mu - \dot \rho c_s^2) \alpha \delta'  - (\theta (\alpha\cdot \nabla \dot \mu) + \alpha (\theta\cdot \nabla \dot \mu)) \delta, && \quad \text{in} \quad \R^{3} \times \R, \\
		\dot{u} & = 0, && \quad \text{for} \quad t \ll 0.
	\end{aligned}\right.
\end{equation}
Consider $\nabla \times \dot{u}$ which encodes the $S$ wave.
By taking the curl of both sides, we obtain
\begin{equation} \label{eq:lmub-EL25}
	\left\{\begin{aligned}
		\square_s (\nabla \times \dot{u}) & = - \big[ (\theta \times \nabla) (\alpha \cdot \nabla) + (\alpha \times \nabla) (\theta \cdot \nabla) \big] \dot \mu \delta \\
		& \quad + \big[ (\theta \times \alpha) (\theta \cdot \nabla) \dot \mu - (\alpha \times \nabla) (\dot{\mu} - \dot{\rho} c_s^2) \big] \delta' \\
		& \quad + (\alpha \times \theta) (\dot{\mu} - \dot{\rho} c_s^2) \delta'', && \quad \text{in} \quad \R^{3} \times \R, \\
		\nabla \times \dot{u} & = 0, && \quad \text{for} \quad t \ll 0,
	\end{aligned}\right.
\end{equation}

By the discussions in Section \ref{sectSS}, we use the ansatz
\begin{equation*}
	\nabla \times \dot{u}
	= w_2(x) \delta'(c_s t - \theta \cdot x) + w_1(x) \delta(c_s t - \theta \cdot x) + w_0(x,t) H(c_s t - \theta \cdot x),
\end{equation*}
where $w_2$, $w_1$ and $w_0$ are vector-valued functions, and $w_2(x) = w_1(x) = 0$ when $\theta \cdot x \ll 0$ and $w_0(x,t) = 0$ when $t \ll 0$.
We compute
\begin{align}
	\square_s (\nabla \times \dot u)
	& = (\square_s w_0) H_0 + (L_s w_0 - c_s^2\Delta w_1) \delta + (2c_s^2\theta\cdot\nabla w_1 - c_s^2\Delta w_2) \delta' + (2c_s^2\theta\cdot\nabla w_2) \delta'', \label{eq:lmu2-EL25}
\end{align}
where
\(
L_s
= 2c_s (\partial_t + c_s \theta \cdot \nabla).
\)
Note that when acting on functions independent of $t$ such as $w_1$ and $w_2$, $L_s$ will be the same as $2c_s^2 \theta \cdot \nabla=2c_s^2L_\theta$.
Comparing different singularities in \eqref{eq:lmub-EL25} and \eqref{eq:lmu2-EL25} gives
\begin{equation*}
	\left\{\begin{aligned}
		\square_s w_0 & = 0, && \text{in} \ \ \{ c_s t - \theta \cdot x > 0 \}, \\
		L_s w_0 - c_s^2\Delta w_1 & = - \big[ (\theta \times \nabla) (\alpha \cdot \nabla) + (\alpha \times \nabla) (\theta \cdot \nabla) \big] \dot \mu =: G_0, && \text{on} \ \ \{ c_s t - \theta \cdot x = 0 \}, \\
		2c_s^2\theta\cdot\nabla w_1 - c_s^2\Delta w_2 & = \big[ (\theta \times \alpha) (\theta \cdot \nabla) \dot \mu - (\alpha \times \nabla) (\dot{\mu} - \dot{\rho} c_s^2) \big] =: G_1, && \text{on} \ \ \mathbb{R}^3, \\
		2c_s^2\theta\cdot\nabla w_2 & = (\alpha \times \theta) (\dot{\mu} - \dot{\rho} c_s^2) =: G_2, && \text{on} \ \ \mathbb{R}^3.
	\end{aligned}\right.
\end{equation*}
Similar to the $P$-wave case, we can simplify the system as follows,
\begin{equation} \label{eq:dpw2-EL25}
	\left\{\begin{aligned}
		\square_s w_0 & = 0, && \text{in} \ \ \{ c_s t - \theta \cdot x > 0 \}, \\
		L_s w_0 & = \sum_{r=0}^2 \left(\frac \Delta 2 L_\theta^{-1}\right)^r G_r, && \text{on} \ \ \{ c_s t - \theta \cdot x = 0 \}.
	\end{aligned}\right.
\end{equation}

Denote
\[
\Sigma_{T,s} := \Sigma_{T,s,\theta} := \big( \partial \Omega \times (-\infty,T] \big) \cap \{ c_s t - \theta \cdot x > 0 \}
\]
and
\[
\mathcal D_s(\theta, \alpha) := \mathcal D_s( \dot \lambda, \dot \mu, \dot \rho;\theta,\alpha) := \sum_{r=0}^2 \left(\frac \Delta 2 L_\theta^{-1}\right)^r G_r.
\]
Using \cite[Proposition 1.4]{ma2023fixed} to \eqref{eq:dpw2-EL25} we can have
\begin{equation} \label{eq:est2-EL25}
	\norm{\mathcal D_s(\theta, \alpha)}_{L^2(\Omega)} \lesssim \norm{w_0 (\cdot,\cdot;\theta,\alpha)}_{H^2 (\Sigma_{T,s})} = \norm{\nabla \times \dot{u}(\cdot,\cdot;\theta,\alpha)}_{H^2 (\Sigma_{T,s})}.
\end{equation}
The last equal sign in \eqref{eq:est2-EL25} is due to the fact that $\nabla \times \dot{u} = w_0$ on $\Sigma_{T,s}$.
We can directly compute
\begin{align}
	L_\theta^2 \mathcal D_s (\theta, \alpha)
	& = - L_\theta^2\big[ (\theta \times \nabla) (\alpha \cdot \nabla) + (\alpha \times \nabla) (\theta \cdot \nabla) \big] \dot \mu \nonumber \\
	& \quad + \frac \Delta 2 L_\theta \big[ (\theta \times \alpha) (\theta \cdot \nabla) \dot \mu - (\alpha \times \nabla) (\dot{\mu} - \dot{\rho} c_s^2) \big] - \frac{\Delta^2}{4} \big[ (\alpha \times \theta) (\dot{\mu} - \dot{\rho} c_s^2) \big]. \label{eq:Ds-EL25}
\end{align}

Now we choose a coordinate frame $\{\bm{e_1} = \theta,\, \bm{e_2} = \alpha,\, \bm{e_3} = \theta \times \alpha \}$.
Rewriting \eqref{eq:Dp-EL25} and \eqref{eq:Ds-EL25} under this coordinates system gives
\begin{equation} \label{eq:Dp3-EL25}
    \partial_1^2 \mathcal D_p(\bm{e_1})
	= - \frac {\Delta^2} 4 \dot \lambda + 2 \left( \partial_1^2 \Delta - \partial_1^4 - \frac {\Delta^2} 4\right) \dot \mu + \left( \frac{\Delta^2}{4} - \frac \Delta 2 \partial_1^2 \right) \dot \rho c_p^2,
\end{equation}
and
\begin{align}
	\partial_1^2 \mathcal D_s (\bm{e_1}, \bm{e_2})
    & = \bm{e_1} \left[ \partial^2_{13} \frac \Delta 2 \dot{\rho} c_s^2 - \left(\partial^4_{1113} + \frac \Delta 2 \partial^2_{13}\right) \dot \mu \right] + \bm{e_2} \partial^4_{1123} \dot \mu \nonumber \\
	& \quad + \bm{e_3} \left[ \left(\partial_{1}^4 + \Delta \partial_1^2 + \frac {\Delta^2} 4 - \partial_{12}^2\right) \dot \mu - \left(\partial_1^2 + \frac \Delta 2\right) \frac \Delta 2 \dot{\rho} c_s^2 \right]. \label{eq:Ds3-EL25}
\end{align}

We first recover $\dot \mu$.
From \eqref{eq:Ds3-EL25}, we see the second component of the vector $\partial_1^2 \mathcal D_s(\theta, \alpha)$ equals to $\partial^4_{1123} \dot \mu$, and actually the second component of the vector $\mathcal D_s(\theta, \alpha)$ equals to $\partial^2_{23} \dot \mu$.
Hence, by \eqref{eq:est2-EL25} we can have
\begin{equation*}
	\norm{\dot \mu}_{L^2(\Omega)}
	\lesssim \norm{\partial^2_{23} \dot \mu}_{L^2(\Omega)}
	\lesssim \norm{\mathcal D_s(\bm{e_1}, \bm{e_2})}_{L^2(\Omega)}
	\lesssim \norm{\nabla \times \dot{u}(\cdot,\cdot;\bm{e_1}, \bm{e_2})}_{H^2 (\Sigma_{T,s})}.
\end{equation*}
Therefore, when either acting differential operators $\partial_t + c_s \theta \cdot \nabla$ or $\alpha \cdot \nabla$ multiple times to \eqref{eq:dpw2-EL25} and using similar arguments as above, we can also obtain for any multi-index $\beta$,
\begin{equation} \label{eq:est3-EL25}
	\norm{\partial^\beta \dot \mu}_{L^2(\Omega)}
	\lesssim \norm{\nabla_{x,t}^{|\beta|} \nabla \times \dot{u}(\cdot,\cdot;\bm{e_1}, \bm{e_2})}_{H^2(\Sigma_{T,s})}.
\end{equation}

Next we recover $\dot \rho$.
From \eqref{eq:Ds3-EL25}, we see the third component of $\partial_1^2 \mathcal D_s (\bm{e_1}, \bm{e_2})$ equals to $(\partial_{1}^4 + \Delta \partial_1^2 + \frac {\Delta^2} 4 - \partial_{12}^2) \dot \mu - (\partial_1^2 + \frac \Delta 2) \frac \Delta 2 \dot{\rho} c_s^2$, so by \eqref{eq:est2-EL25} we have
\begin{align*}
	\norm{\nabla_{x,t}^2 \nabla \times \dot{u}(\cdot,\cdot;\bm{e_1}, \bm{e_2})}_{H^2 (\Sigma_{T,s})}
    & \gtrsim \norm{\partial_1^2 \mathcal D_s (\bm{e_1}, \bm{e_2})}_{L^2(\Omega)} \\
	& \gtrsim \left\|\left(\partial_{1}^4 + \Delta \partial_1^2 + \frac {\Delta^2} 4 - \partial_{12}^2\right) \dot \mu - \left(\partial_1^2 + \frac \Delta 2\right) \frac \Delta 2 \dot{\rho} c_s^2\right\|_{L^2(\Omega)} \nonumber \\
    & \gtrsim \norm{(2\partial_1^2 + \Delta) \Delta \dot{\rho}}_{L^2(\Omega)} - \norm{\dot \mu}_{H^4(\Omega)}.
\end{align*}
Note that the operator $-(2\partial_1^2 + \Delta)$ is elliptic and $\dot \rho$ has zero trace on $\partial \Omega$, so by the interior regularity of elliptic operator, together with the Ponicar\'e inequality, we have
\[
\norm{(2\partial_1^2 + \Delta) \Delta \dot{\rho}}_{L^2(\Omega)}
\lesssim \norm{\Delta \dot{\rho}}_{H^2(\Omega)}
\lesssim \norm{\dot{\rho}}_{H^4(\Omega)}.
\]
Moreover, by \eqref{eq:est3-EL25} we have
\(
\norm{\dot \mu}_{H^4(\Omega)} \lesssim \norm{\nabla_{x,t}^4 \nabla \times \dot{u}(\cdot,\cdot;\bm{e_1}, \bm{e_2})}_{H^2(\Sigma_{T,s})},
\)
thus we can continue the computation,
\begin{equation} \label{eq:est4-EL25}
	\norm{\nabla_{x,t}^2 \nabla \times \dot{u}(\cdot,\cdot;\bm{e_1}, \bm{e_2})}_{H^2 (\Sigma_{T,s})}
    \gtrsim \norm{\dot \rho}_{H^4(\Omega)} - \norm{\nabla_{x,t}^4 \nabla \times \dot{u}(\cdot,\cdot;\bm{e_1}, \bm{e_2})}_{H^2(\Sigma_{T,s})}.
\end{equation}

Finally we recover $\dot \lambda$.
Similarly, applying $(\partial_t + c_p \theta \cdot \nabla)^2$ to \eqref{eq:dpw-EL25} and using \eqref{eq:Dp3-EL25}, we have
\begin{align*}
	& \, \norm{\nabla_{x,t}^2 \nabla \cdot \dot u(\cdot,\cdot;\bm{e_1}, \bm{e_1})}_{H^2(\Sigma_{T,p})} \\
	\gtrsim & \, \left\| {-}\left(\frac \Delta 2\right)^2 \dot \lambda + 2 \left[ \Delta \partial_1^2 - \partial_1^4 - \left(\frac \Delta 2\right)^2 \right] \dot \mu + \left[ \left(\frac \Delta 2\right)^2 - \frac \Delta 2 \partial_1^2 \right] \dot \rho c_p^2 \right\|_{L^2(\Omega)} \\
	\gtrsim & \, \norm{\Delta^2 \dot \lambda}_{L^2(\Omega)} - \sum_{|\beta| = 4} \norm{\partial^\beta \dot \mu}_{L^2(\Omega)} - \sum_{|\beta| = 4} \norm{\partial^\beta \dot \rho}_{L^2(\Omega)}\\
    \gtrsim & \, \norm{\dot \lambda}_{H^4(\Omega)} - \norm{\dot \mu}_{H^4(\Omega)} - \norm{\dot \rho}_{H^4(\Omega)}.
\end{align*}
Combining this with \eqref{eq:est3-EL25} and \eqref{eq:est4-EL25}, we finally have
\begin{align}
    & \, \norm{\dot \lambda}_{H^4(\Omega)} + \norm{\dot \mu}_{H^4(\Omega)} + \norm{\dot \rho}_{H^4(\Omega)} \nonumber \\
	\lesssim & \, \norm{\nabla_{x,t}^2 \nabla \cdot \dot u(\cdot,\cdot;\bm{e_1}, \bm{e_1})}_{H^2(\Sigma_{T,p})} + \sum_{j \leq 4} \norm{\nabla_{x,t}^j \nabla \times \dot{u}(\cdot,\cdot;\bm{e_1}, \bm{e_2})}_{H^2(\Sigma_{T,s})} \nonumber \\
    \lesssim & \, \norm{\nabla_{x,t}^3 \dot u(\cdot,\cdot;\bm{e_1}, \bm{e_1})}_{H^2(\Sigma_{T,p})} + \norm{\nabla_{x,t}^5 \dot{u}(\cdot,\cdot;\bm{e_1}, \bm{e_2})}_{H^2(\Sigma_{T,s})}. \label{eq:est5-EL25}
\end{align}
Remember that $\dot u(\cdot,\cdot;\bm{e_1}, \bm{e_1})$ is the linearization of the solution of $(\rho \partial_t^2 - \mathcal L_{\lambda, \mu}) u = 0$ the with incident $P$ wave $u_0 = \bm{e_1} H_0(c_p t - \bm{e_1} \cdot x)$,
and $\dot u(\cdot,\cdot;\bm{e_1}, \bm{e_2})$ is the linearization of the solution with the incident $S$ wave $u_0 = \bm{e_2} H_0(c_s t - \bm{e_1} \cdot x)$.
We obtained the Lipschitz stability.

\subsection{The nonlinear map}

To derive uniqueness and stability for the local rigidity problem, we shall use the following abstract result from \cite[Theorem 2]{SU12}.

\begin{proposition} \label{prop:StUh-EL25}
	Let $\mathcal{X}_j$, $\mathcal{Y}_j$ with $j=1,2,3$ be Banach spaces with $\mathcal{X}_3 \subset \mathcal{X}_1 \subset \mathcal{X}_2$ and $\mathcal{Y}_3 \subset \mathcal{Y}_2 \subset \mathcal{Y}_1$, such that the following interpolation estimates hold:
	\begin{equation} \label{eq:Con1}
		\norm{f}_{\mathcal{X}_1} \lesssim \norm{f}_{\mathcal{X}_2}^{\mu_1} \norm{f}_{\mathcal{X}_3}^{1-\mu_1}, \quad \norm{g}_{\mathcal{Y}_2} \lesssim \norm{g}_{\mathcal{Y}_1}^{\mu_2} \norm{g}_{\mathcal{Y}_3}^{1-\mu_2}, \quad \mu_1, \mu_2 \in (0,1], \; \; \mu_1 \mu_2 >1/2.
	\end{equation}
	Let $\mathcal{A}: \mathcal{V}_1 \to \mathcal{Y}_1$ be a nonlinear map where $\mathcal{V}_1 \subset \mathcal{X}_1$ is an open subset of $ \mathcal{X}_1$. Consider $f_0\in \mathcal{V}_1$ and assume that 
	\begin{equation} \label{eq:Con2}
		\mathcal{A}(f) = \mathcal{A}(f_0) + A_{f_0}(f-f_0) + R_{f_0}(f), \quad \norm{R_{f_0}(f)}_{\mathcal{Y}_1} \leq C(f_0) \norm{f-f_0}^2_{\mathcal{X}_1}
	\end{equation}
	holds for all $f$ in some neighborhood of $f_0$ in $\mathcal{V}_1$.
	Here $A_{f_0}$ stands for the Fr\'echet derivative of $\mathcal{A}$ at $f_0$.
	In addition, suppose  that
	\begin{equation} \label{eq:Con3}
		\norm{h}_{\mathcal{X}_2} \leq C \norm{A_{f_0} h}_{\mathcal{Y}_2}, \quad h \in \mathcal{X}_1
	\end{equation}
	and
	\begin{equation} \label{eq:Con4}
		\norm{A_{f_0} h}_{\mathcal{Y}_3}\leq C \norm{h}_{\mathcal{X}_3}, \quad h\in \mathcal{X}_3.
	\end{equation}
	Then for any $L>0$ there exists $\epsilon>0$, so that for any $f$ with
	\begin{equation} \label{cond:f_bounds}
		\norm{f-f_0}_{\mathcal{X}_1} \leq \epsilon, \quad \norm{f}_{\mathcal{X}_3}\leq L,
	\end{equation}
	one has the conditional stability estimate
	\[
	\norm{f-f_0}_{_{\mathcal{X}_1}}\leq CL^{2-\mu_1-\mu_2} \norm{\mathcal{A}(f)-\mathcal{A}(f_0)}^{\mu_1\mu_2}_{\mathcal{Y}_1}.
	\]
	In particular, if $\mathcal A(f) = \mathcal A(f_0)$ for some $f$ satisfying \eqref{cond:f_bounds}, then $f=f_0$.
\end{proposition}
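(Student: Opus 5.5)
The plan is to run the argument of \cite{SU12} directly. We set $h := f - f_0$ and chain the estimates \eqref{eq:Con1}--\eqref{eq:Con4} into a single inequality for $\norm{h}_{\mathcal X_1}$. The quadratic remainder then appears on the right with an exponent strictly larger than one, so it can be absorbed into the left side once $\epsilon$ is small.

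\textbf{Step 1 (interpolation in $\mathcal X$).} By the first estimate in \eqref{eq:Con1},
\[
\norm{h}_{\mathcal X_1} \lesssim \norm{h}_{\mathcal X_2}^{\mu_1}\norm{h}_{\mathcal X_3}^{1-\mu_1}.
\]
We bound $\norm{h}_{\mathcal X_3} \le \norm{f}_{\mathcal X_3} + \norm{f_0}_{\mathcal X_3} \lesssim L$. Here we assume $f_0 \in \mathcal X_3$, which holds in our application since $f_0$ is constant, and we enlarge $L$ if necessary. Then \eqref{eq:Con3} gives
\[
\norm{h}_{\mathcal X_1} \lesssim L^{1-\mu_1}\norm{A_{f_0}h}_{\mathcal Y_2}^{\mu_1}.
\]

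\textbf{Step 2 (interpolation in $\mathcal Y$).} Since $h \in \mathcal X_3$, we have $A_{f_0}h \in \mathcal Y_3$ by \eqref{eq:Con4}, with $\norm{A_{f_0}h}_{\mathcal Y_3} \lesssim L$. The second estimate in \eqref{eq:Con1} then gives
\[
\norm{A_{f_0}h}_{\mathcal Y_2} \lesssim L^{1-\mu_2}\norm{A_{f_0}h}_{\mathcal Y_1}^{\mu_2}.
\]
Combining with Step 1,
\[
\norm{h}_{\mathcal X_1} \lesssim L^{2-\mu_1-\mu_2}\norm{A_{f_0}h}_{\mathcal Y_1}^{\mu_1\mu_2}.
\]

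\textbf{Step 3 (linearization and absorption).} By \eqref{eq:Con2}, for $\epsilon$ small enough that $f$ lies in the neighborhood where the Taylor expansion holds,
\[
\norm{A_{f_0}h}_{\mathcal Y_1} \le \norm{\mathcal A(f)-\mathcal A(f_0)}_{\mathcal Y_1} + C(f_0)\norm{h}_{\mathcal X_1}^2.
\]
Set $\gamma := \mu_1\mu_2 \le 1$ and use $(a+b)^\gamma \le a^\gamma + b^\gamma$. This yields
\[
\norm{h}_{\mathcal X_1} \le C L^{2-\mu_1-\mu_2}\Big(\norm{\mathcal A(f)-\mathcal A(f_0)}_{\mathcal Y_1}^{\gamma} + \norm{h}_{\mathcal X_1}^{2\gamma}\Big).
\]
Because $2\gamma > 1$ and $\norm{h}_{\mathcal X_1} \le \epsilon$, we have
\[
\norm{h}_{\mathcal X_1}^{2\gamma} \le \epsilon^{2\gamma-1}\norm{h}_{\mathcal X_1}.
\]
Choosing $\epsilon$ so that $C L^{2-\mu_1-\mu_2}\epsilon^{2\gamma-1} \le 1/2$, this term is absorbed into the left-hand side. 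This gives the conditional stability estimate with constant $2C$.

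\textbf{Step 4 (uniqueness).} If $\mathcal A(f) = \mathcal A(f_0)$, the estimate of Step 3 forces $\norm{h}_{\mathcal X_1} = 0$, so $f = f_0$.

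The only delicate point is Step 3. The smallness threshold $\epsilon$ must be chosen depending on $L$, on the implicit constants, and on the neighborhood in which \eqref{eq:Con2} is valid. The absorption works precisely because $\mu_1\mu_2 > 1/2$; this is the hypothesis that makes the argument go through.
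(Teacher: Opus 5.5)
The paper gives no proof of this proposition; it quotes it from \cite[Theorem 2]{SU12}. Your chain is exactly the Stefanov--Uhlmann argument: interpolate in $\mathcal X$, invert the linearization via \eqref{eq:Con3}, interpolate in $\mathcal Y$ using \eqref{eq:Con4}, and absorb the quadratic remainder because $2\mu_1\mu_2>1$. You are also right that one needs $f_0\in\mathcal X_3$, so that $\norm{f-f_0}_{\mathcal X_3}$ is controlled, which the statement leaves implicit.

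There is one genuine slip, in the line ``Combining with Step 1''. You substitute $\norm{A_{f_0}h}_{\mathcal Y_2}\lesssim L^{1-\mu_2}\norm{A_{f_0}h}_{\mathcal Y_1}^{\mu_2}$ into $\norm{h}_{\mathcal X_1}\lesssim L^{1-\mu_1}\norm{A_{f_0}h}_{\mathcal Y_2}^{\mu_1}$. This raises $L^{1-\mu_2}$ to the power $\mu_1$, so you get $L^{(1-\mu_1)+\mu_1(1-\mu_2)}=L^{1-\mu_1\mu_2}$, not $L^{2-\mu_1-\mu_2}$.

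Since $(2-\mu_1-\mu_2)-(1-\mu_1\mu_2)=(1-\mu_1)(1-\mu_2)\ge 0$, this yields the displayed estimate only for $L\ge 1$. For small $L$ the displayed power, with $C$ independent of $L$, is in fact false in general. On the circle take $\mathcal X_1=H^1$, $\mathcal X_2=\mathcal Y_2=L^2$, $\mathcal X_3=\mathcal Y_3=H^{10}$, $\mathcal Y_1=H^{-1}$, $\mathcal A$ the identity and $f_0=0$, so that $\mu_1=9/10$ and $\mu_2=10/11$. For $f=c\cos(Nx)$ with $\norm{f}_{\mathcal X_3}=L$ one has $\norm{f}_{\mathcal X_1}\simeq LN^{-9}\to 0$, while the claimed inequality reduces to $1\lesssim CL^{1/110}$.

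What your argument really proves is $\norm{f-f_0}_{\mathcal X_1}\le CL^{1-\mu_1\mu_2}\norm{\mathcal A(f)-\mathcal A(f_0)}_{\mathcal Y_1}^{\mu_1\mu_2}$ for $L\ge\norm{f_0}_{\mathcal X_3}$. This implies the stated form when $L\ge\max(1,\norm{f_0}_{\mathcal X_3})$, and your ``enlarge $L$ if necessary'' should be made explicit in exactly this way. This is harmless for the paper's application, where $L$ is an a priori bound that can be taken large, and Step 4 (uniqueness) is unaffected.
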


We are ready to prove Theorem \ref{thm:2-EL25}.

\begin{proof}[Proof of Theorem \ref{thm:2-EL25}]
    Denote $\kappa := (\lambda, \mu, \rho)$ and $\kappa_0 := (\lambda_0, \mu_0, 1)$.
    Abbreviate $P_\kappa := \rho \partial_t^2 - \mathcal L_{\lambda, \mu}$ and $P_{\kappa_0} := \partial_t^2 - \mathcal L_{\lambda_0, \mu_0}$.
    Let $P_\kappa u = 0$, $P_{\kappa_0} u_0 = 0$ with initial conditions as in either \eqref{eq:e1-EL25} or \eqref{eq:e1s-EL25}.
    We use $u^p$ to indicate the wave corresponding to the initial condition $u^p = \theta H_0(c_p t - \theta \cdot x)$ when $t \ll 0$, and use $u^s$ to indicate the wave corresponding to the initial condition $u^s = \alpha H_0(c_s t - \theta \cdot x)$ when $t \ll 0$.
    The superscripts $p$ and $s$ represent compressional and shear waves.
    
    Denote the following nonlinear map
    \[
    \mathcal A \colon \kappa \ \mapsto \ (u^p, u^s)|_{\Omega^c \times (-\infty,T]}.
    \]
    We want to apply Proposition \ref{prop:StUh-EL25} to the operator $\mathcal A$.
    To that end, we let $\dot{u}$ satisfy $P_{\kappa_0} \dot{u} = -P_{\dot \kappa} u_0$ with the initial condition $\dot{u} = 0$ when $t \ll 0$.
    Let $r = u - u_0 - \dot{u}$ with the initial condition $r = 0$ when $t \ll 0$.
    To apply Proposition \ref{prop:StUh-EL25}, we need to check the following
    \begin{align}
    	& \norm{(r^p, r^s)}_{H^{-k}((-\infty,T); H^{s+2}(\Omega^c))} \leq C \norm{\kappa - \kappa_0}_{H^{s_0}(\Omega)}^2, \label{eq:Con1-EL25} \\
    	& \norm{\dot \kappa}_{L^2(\Omega)} \leq C \norm{(\dot{u}^p, \dot{u}^s)}_{H^{10}((-\infty,T); H^{10}(\Omega^c))}, \label{eq:Con2-EL25} \\
    	& \norm{(\dot{u}^p, \dot{u}^s)}_{H^\alpha((-\infty,T); H^\beta(\Omega^c))} \leq C \norm{\dot \kappa}_{H^N(\Omega)}. \label{eq:Con3-EL25}
    \end{align}
    The corresponding function spaces are specified as follows
    \begin{alignat*}{3}
    	\mathcal X_1 & = H^{s_0}(\Omega), & \mathcal X_2 & = L^2(\Omega), & \mathcal X_3 & = H^\alpha((-\infty,T); H^\beta(\Omega^c)), \\
    	\mathcal Y_1 & = H^{-k} H^{s+2}(\Omega^c), \quad & \mathcal Y_2 & = H^{10}((-\infty,T); H^{10}(\Omega^c)), \quad & \mathcal Y_3 & = H^N(\Omega),
    \end{alignat*}
    where $\alpha$, $\beta$ and $N$ are chosen large enough such that the inclusion relation in Proposition \ref{prop:StUh-EL25} is satisfied, and $k \geq 0$ be an integer, and let $-s_0 \leq s \leq s_0$.
    The conditions \eqref{eq:Con1-EL25}--\eqref{eq:Con3-EL25} correspond to \eqref{eq:Con2}--\eqref{eq:Con4}, respectively.
    We verify conditions \eqref{eq:Con1-EL25}-\eqref{eq:Con3-EL25} one by one.\\
    
    \noindent\textbf{Condition \eqref{eq:Con1-EL25}:}
    $r$ satisfies
    \[
    P_{\kappa_0} r
    = P_{\kappa_0} (u - u_0 - \dot{u})
    = P_{\kappa_0 - \kappa} u + P_{\kappa - \kappa_0} u_0
    = P_{\kappa - \kappa_0} (u_0 - u).
    \]
    By Helmholtz decomposition we could divide $R$ as
    \(
    r = \nabla \phi + \nabla \times \vec \psi,
    \)
    then
    \[
    \square_p (\Delta \phi) = \nabla \cdot \big(P_{\kappa - \kappa_0} (u_0 - u) \big), \quad \text{and} \quad
    \square_s (\nabla \times \nabla \times \vec \psi) = \nabla \times \big( P_{\kappa - \kappa_0} (u_0 - u) \big).
    \]
    In the rest of the proof, we abbreviate the $H^b(\R^3)$-valued Sobolev spaces $H^a((-\infty,T); H^b(\R^3))$ as $H^a H^b$ for short.
    By \cite[Lemma A.4]{ma2023fixed}, we have
    \begin{align*}
    	\norm{(\nabla \cdot r, \nabla \times r)}_{H^{-k} H^{s+1}}
    	& = \norm{(\Delta \phi, \nabla \times \nabla \times \vec \psi)}_{H^{-k} H^{s+1}} \\
    	& \lesssim \norm{P_{\kappa - \kappa_0} (u_0 - u)}_{H^{-k} H^{s+1}} \\
    	& \lesssim \norm{\kappa - \kappa_0}_{L^\infty(\Omega)} (\norm{u_0 - u}_{H^{-k+2} H^{s+1}} + \norm{u_0 - u}_{H^{-k} H^{s+3}}).
    \end{align*}
    Because
    \[
    \norm{(\nabla \cdot r, \nabla \times r)}_{L^2(\R^3)}
    = \norm{(\xi \cdot \hat{r}, \xi \times \hat{r})}_{L^2(\R^3)}
    = \big\lVert |\xi| |\hat{r}| \big\rVert_{L^2(\R^3)}
    \simeq \norm{\nabla r}_{L^2(\R^3)},
    \]
    so
    \[
    \norm{(\nabla \cdot r, \nabla \times r)}_{H^{-k} H^{s+1}}
    \simeq \norm{\nabla r}_{H^{-k} H^{s+1}}
    \simeq \norm{r}_{H^{-k} H^{s+2}},
    \]
    thus
    \begin{equation} \label{eq:Co1-EL25}
    	\norm{r}_{H^{-k} H^{s+2}}
    	\lesssim \norm{\kappa - \kappa_0}_{L^\infty(\Omega)} (\norm{u_0 - u}_{H^{-k+2} H^{s+1}} + \norm{u_0 - u}_{H^{-k} H^{s+3}}).
    \end{equation}
    Once again, because
    \(
    P_{\kappa_0} (u_0 - u)
    = -P_{\kappa_0} u
    = P_{\kappa - \kappa_0} u,
    \)
    so similar to the derivation of \eqref{eq:Co1-EL25}, we can have
    \begin{equation} \label{eq:Co2-EL25}
    	\norm{u_0 - u}_{H^{-k} H^{s+1}}
    	\lesssim \norm{\kappa - \kappa_0}_{L^\infty(\Omega)} (\norm{u}_{H^{-k+2} H^{s+1}} + \norm{u}_{H^{-k} H^{s+3}}).
    \end{equation}
    Combining \eqref{eq:Co1-EL25} and \eqref{eq:Co2-EL25}, we obtain
    \begin{equation} \label{eq:Co3-EL25}
    	\norm{r}_{H^{-k} H^{s+2}}
    	\lesssim \norm{\kappa - \kappa_0}_{H^{s_0}(\Omega)}^2 (\norm{u}_{H^{-k} H^{s+5}} + \norm{u}_{H^{-k+2} H^{s+3}} + \norm{u}_{H^{-k+4} H^{s+1}}),
    \end{equation}
    where $s_0 > \lceil 3/2 \rceil$ so that $H^{s_0}(\Omega) \hookrightarrow L^\infty(\Omega)$.
    This implies \eqref{eq:Con1-EL25}.\\

    \noindent\textbf{Condition \eqref{eq:Con2-EL25}:}
    This condition is the Lipschitz stability of the linearized problem, and it is implied by \eqref{eq:est5-EL25}.
    Indeed, from \eqref{eq:est5-EL25} we have
    \begin{align*}
    	\norm{\dot \kappa}_{L^2(\Omega)}
        & \lesssim \norm{\dot \lambda}_{L^2(\Omega)} + \norm{\dot \mu}_{L^2(\Omega)} + \norm{\dot \rho}_{L^2(\Omega)} \\
        & \lesssim \norm{\nabla_{x,t}^3 \dot u(\cdot,\cdot;\bm{e_1}, \bm{e_1})}_{H^2(\Sigma_{T,p})} + \norm{\nabla_{x,t}^5 \dot{u}(\cdot,\cdot;\bm{e_1}, \bm{e_2})}_{H^2(\Sigma_{T,s})} \\
        & \lesssim \norm{(\dot{u}^p, \dot{u}^s)}_{H^{10}((-\infty,T); H^{10}(\Omega^c))}.
    \end{align*}
   ~\\
    
    \noindent\textbf{Condition \eqref{eq:Con3-EL25}:}
    This condition is an \textit{a priori} estimate for the linearized elastic system.
    We know that $\dot{u}$ satisfies $P_{\kappa_0} \dot{u} = -P_{\dot \kappa} u_0$, thus similar to the derivation of \eqref{eq:Co1-EL25} we have
    \begin{equation} \label{eq:Co4-EL25}
    	\norm{\dot{u}}_{H^{-k} H^{s+2}}
    	\lesssim \norm{\dot \kappa}_{L^\infty(\Omega)} (\norm{u_0}_{H^{-k+2} H^{s+1}} + \norm{u_0}_{H^{-k} H^{s+3}})
    	\lesssim \norm{\dot \kappa}_{H^{s_0}(\Omega)}.
    \end{equation}
    which further indicates \eqref{eq:Con3-EL25}.
    
    Therefore, by Proposition \ref{prop:StUh-EL25}, we obtain the following local H\"older inverse stability:
    \begin{equation*}
    	\norm{(\rho - \rho_0, \lambda - \lambda_0, \mu - \mu_0)}_{H^{s_0}(\Omega)} \leq C \norm{(u^p - u_0^p, u^s - u_0^s)}_{H^{-k}((-\infty,T]; H^{s+2}(\Omega^c))}^\mu.
    \end{equation*}
    The proof is complete.
\end{proof}

\section*{Acknowledgements}

The research of M.L. was partially supported by PDE-Inverse project of the European Research Council of the European Union, the FAME and Finnish Quantum flagships and the grant 336786 of the Research Council of Finland. 
The research of S.M. is partially supported by the NSFC (No.~12301540).
L.O. was supported by the European Research Council
of the European Union, grant 101086697 (LoCal),
and the Research Council of Finland, grants 347715,
353096 (Centre of Excellence of Inverse Modelling and Imaging)
and 359182 (Flagship of Advanced Mathematics for Sensing Imaging and Modelling).
Views and opinions expressed are those of the authors only and do not
necessarily reflect those of the European Union or the other funding
organizations.
M.S. was partly supported by the Research Council of Finland (Centre of Excellence in Inverse Modelling and Imaging and FAME Flagship, grants 353091 and 359208).
J.Z. is supported by National Key Research and Development Programs of China (No.~2023YFA1009103), NSFC (No.~12471396), Science and Technology Commission of Shanghai Municipality (23JC1400501).

\vspace{.3cm}
\noindent \textbf{Data Availability:} The authors shall permit all the data underlying the findings of this manuscript to be shared by any researchers or groups who are interested in the article.

\vspace{.3cm}
\noindent \textbf{Conflict of Interest:} The authors declare that there is no conflict of interest regarding the publication of this paper.


{

}

\end{document}